\documentclass[]{amsproc}

\usepackage{lineno,hyperref}
\usepackage{amsmath,amssymb,amscd,euscript}
\usepackage{dsfont}
\usepackage{subfigure}
\usepackage{graphicx}
\modulolinenumbers[5]

\newtheorem{definition}{Definition}[section]
\newtheorem{theorem}{Theorem}[section]
\newtheorem{lemma}[theorem]{Lemma}
\newtheorem{remark}[theorem]{Remark}
\newtheorem{corollary}[theorem]{Corollary}
\newtheorem{example}[theorem]{Example}

\allowdisplaybreaks \allowdisplaybreaks[4]









\begin{document}

\title[]{Strong convergence rate of the Euler scheme for SDEs driven by additive rough fractional noises}

\author{Chuying Huang}
\address{School of Mathematics and Statistics \& FJKLMAA, Fujian Normal University, Fuzhou 350117, China}
\curraddr{}
\email{huangchuying@fjnu.edu.cn; huangchuying@lsec.cc.ac.cn (Corresponding author)}
\author{Xu Wang}
\address{Academy of Mathematics and Systems Science, Chinese Academy of Sciences, Beijing 100190, China; School of Mathematical Sciences, University of Chinese Academy of Sciences, Beijing 100049, China}
\curraddr{}
\email{wangxu@lsec.cc.ac.cn}

\keywords{fractional Brownian motion, numerical analysis, Malliavin calculus, rough path,  $2$D Young integral}

\date{\today}

\dedicatory{}

\begin{abstract}
The strong convergence rate of the Euler scheme for SDEs driven by additive fractional Brownian motions is studied, where the fractional Brownian motion has Hurst parameter $H\in(\frac13,\frac12)$ and the drift coefficient is not required to be bounded. The Malliavin calculus, the rough path theory and the $2$D Young integral are utilized to overcome the difficulties caused by the low regularity of the fractional Brownian motion and the unboundedness of the drift coefficient. The Euler scheme is proved to have strong order $2H$ for the case that the drift coefficient has bounded derivatives up to order three and have strong order $H+\frac12$ for linear cases. Numerical simulations are presented to support the theoretical results.
\end{abstract}

\maketitle





\section{Introduction}

Stochastic differential equations driven by fractional Brownian motions with Hurst parameter $H\in(0,1)$ are basic models to characterize the randomness phenomena and have various applications in the fields of hydrology \cite{fbm}, porous media \cite{liu17SINUM}, oscillators \cite{HHW18}, explorations \cite{FLW20}, finance \cite{hhkw} and so on. If $H>\frac12$, the fractional Brownian motion (fBm) exhibits a long-range dependence property.
If $H=\frac12$, the fBm is equivalent to the standard Brownian motion so that the increments are independent. 
If $H<\frac12$, the fBm exhibits a short-range dependence property and the regularity of the sample paths is relatively low, in which case we call it rough fractional noise.
In this article, we investigate the numerical approximation  for the stochastic differential equation (SDE) driven by an additive  rough fractional noise
\begin{align}\label{sde}
	{\rm d}X_t=a(X_t){\rm d} t +\sigma {\rm d}B_t,\quad t\in(0,T],
\end{align}
where $X_0\in\mathbb{R}$ is a deterministic  initial value, the drift coefficient $a$ is unbounded, and $B=\{B_t\}_{t\in[0,T]}$ is the fBm with Hurst parameter $H\in(\frac13,\frac12)$.

One of the main obstacles in the convergence analysis on numerical schemes for SDEs in the rough case is the low regularity of the noise, which leads to the lack of an explicit formulation for the covariance kernel of the noise. Meanwhile, the unboundedness of the drift coefficient and the correlation of the increments of the fBm make the interaction of the local errors between the numerical solution and the exact solution more complicated. These difficulties result in that the numerical analysis in this case is far from well-developed. To deal with the problems mensioned above, we apply the Malliavin calculus, the rough path theory and the $2$D Young integral to establish the strong convergence rate of the Euler scheme for \eqref{sde}. More precisely, 
for $n\in\mathbb{N}_+$, denoting $h=\frac{T}{n}$ and $t_k=kh$, we focus on the following continuous interpolation of the Euler scheme
\begin{align}\label{sch}
	Y_{t}=Y_{t_k}+a(Y_{t_k})(t-t_k) +\sigma \big(B_t-B_{t_k}\big),\quad t\in(t_k,t_{k+1}],~k=0,\cdots,n-1.
\end{align}
Our main result is stated in the following.
\begin{theorem}\label{main}
	Let $H\in(\frac13,\frac12)$. Assume that $a:\mathbb{R}\rightarrow\mathbb{R}$ has  bounded derivatives up to order three. Then it holds that 
	\begin{align*}
		\bigg(\sup_{t\in[0,T]}\mathbb{E}\big|X_t-Y_t\big|^2\bigg)^{1/2}\le Ch^{2H},
	\end{align*}
	where $X$ solves \eqref{sde} and $Y$ is given by the the Euler scheme \eqref{sch}.
\end{theorem}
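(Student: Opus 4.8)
The plan is to exploit that the noise in \eqref{sde} is additive, so that the error $e_t:=X_t-Y_t$ solves no rough integral equation but the purely pathwise identity $e_t=\int_0^t\bigl(a(X_s)-a(Y_{\eta(s)})\bigr)\,{\rm d}s$ with $\eta(s):=t_k$ for $s\in(t_k,t_{k+1}]$; in particular $t\mapsto e_t$ is absolutely continuous. I would first collect the a priori input from the rough path / Young theory and Malliavin calculus: $X$, $Y$, $e$ have finite moments of all orders together with their $H^-$-H\"older seminorms, they are Malliavin differentiable, and the discrete Jacobians $DY_{t_k}$, $D^2Y_{t_k}$ of \eqref{sch} are bounded and, in their argument, $H^-$-H\"older like the fBm; a one-line Gronwall estimate also gives the crude bound $\sup_t\|e_t\|_{L^4}\le Ch^{H}$. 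Next I would Taylor-expand $a(X_s)-a(Y_{\eta(s)})$ around $Y_{\eta(s)}$ using $X_s-Y_{\eta(s)}=e_s+a(Y_{\eta(s)})(s-\eta(s))+\sigma(B_s-B_{\eta(s)})$, multiply the identity for $e_t$ by $e_t$, take expectations, and set $\phi(t):=(\mathbb E|e_t|^2)^{1/2}$; using $|a'|,|a''|\le C$ and the crude $L^4$ bound this yields
\[
\phi(t)^2\le \phi(t)\Bigl(C\!\int_0^t\!\phi(s)\,{\rm d}s+Ch^{2H}+|\sigma|\,\|\mathcal M_t\|_{L^2}\Bigr),
\qquad \mathcal M_t:=\int_0^t a'(Y_{\eta(s)})\bigl(B_s-B_{\eta(s)}\bigr)\,{\rm d}s .
\]
Dividing by $\phi(t)$ (the inequality being trivial where $\phi(t)=0$) and applying Gronwall reduces the theorem to the pointwise-in-$t$ estimate $\sup_{t\in[0,T]}\|\mathcal M_t\|_{L^2}\le Ch^{2H}$.

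To estimate $\mathcal M_t$, I would fix $t$, expand $\mathbb E|\mathcal M_t|^2=\iint_{[0,t]^2}\mathbb E\bigl[a'(Y_{\eta(s)})a'(Y_{\eta(r)})(B_s-B_{\eta(s)})(B_r-B_{\eta(r)})\bigr]\,{\rm d}s\,{\rm d}r$, and observe that each factor $B_s-B_{\eta(s)}=B(\mathds{1}_{(\eta(s),s]})$ lies in the first Wiener chaos. Two successive Malliavin integrations by parts in the Cameron--Martin space $\mathcal H$ of $B$ then turn the integrand into a leading term $\mathbb E[a'(Y_{\eta(s)})a'(Y_{\eta(r)})]\,\langle\mathds{1}_{(\eta(s),s]},\mathds{1}_{(\eta(r),r]}\rangle_{\mathcal H}$ plus correction terms built from pairings such as $\langle DY_{\eta(s)},\mathds{1}_{(\eta(r),r]}\rangle_{\mathcal H}$ and $\langle D^2Y_{\eta(s)},\mathds{1}_{(\eta(s),s]}\otimes\mathds{1}_{(\eta(r),r]}\rangle_{\mathcal H}$, weighted by bounded functions of $Y$. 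All these pairings $\langle\mathds{1}_{(u,v]},\mathds{1}_{(u',v']}\rangle_{\mathcal H}$ are rectangular increments of $R(u,v)=\tfrac12(u^{2H}+v^{2H}-|u-v|^{2H})$. The key observation is that, since $2H-2<-1$ for $H<\tfrac12$, the off-diagonal rectangular increments over grid cells decay summably, so that pairing a bounded \emph{delocalised} step function (a discrete Jacobian $DY_{t_k}$, supported on $[0,t_k]$) against a single cell indicator $\mathds{1}_{(\eta(s),s]}$ is $O(h^{2H})$. Hence the leading term contributes only $O(h^{1+2H})$ -- it is the only term surviving when $a''\equiv a'''\equiv0$, which is why the linear case attains the sharper order $H+\tfrac12$ -- while each of the (at most two) Malliavin derivatives landing on a delocalised Jacobian in a correction term contributes an extra factor $O(h^{2H})$, for a total $O(h^{4H})$.

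To make the pairings rigorous I would invoke the $2$D Young integral: the sums $\sum_{k,l}\psi_k\phi_l\,\langle\mathds{1}_{(t_k,t_{k+1}]},\mathds{1}_{(t_l,t_{l+1}]}\rangle_{\mathcal H}$ above, with $\psi,\phi$ the relevant random $H^-$-H\"older Jacobian fields sampled on the grid, are $2$D Young integrals $\iint\psi(s)\phi(r)\,{\rm d}R(s,r)$; indeed $R$ has finite $2$D $\rho$-variation with $\rho=\tfrac1{2H}<\tfrac32$, and since $H>\tfrac13$ one has $H+2H>1$, which is exactly the $2$D Young complementary-regularity condition, giving both existence and the quantitative bounds used above. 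Splitting $[0,t]^2$ into a diagonal strip of width $O(h)$ -- contributing $O(h^{1+2H})=o(h^{4H})$ because $H<\tfrac12$ -- and its complement, controlled by the $2$D Young estimate together with the cell-counting above, I would conclude $\mathbb E|\mathcal M_t|^2\le Ch^{4H}$ with $C$ depending only on $T$ and $\|a\|_{C^3_b}$, uniformly in $t$. Feeding this into the Gronwall inequality for $\phi$ gives $\sup_{t\in[0,T]}\mathbb E|e_t|^2\le Ch^{4H}$, which is the assertion.

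I expect the main obstacle to be the estimate of the random-coefficient oscillatory integral $\mathcal M_t$. A plain $L^2$ computation destroys the off-diagonal cancellation among the fBm increments, while a subinterval-wise triangle inequality only produces the non-optimal rate $h^H$; recovering the true rate genuinely requires trading each fBm increment for a derivative of the scheme via Malliavin integration by parts, and then controlling the resulting Cameron--Martin pairings by $2$D Young estimates -- which is precisely what $H>\tfrac13$ permits, and where the bounded third derivative of $a$ is needed (through $D^2Y$ in the correction terms).
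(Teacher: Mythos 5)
Your proposal is correct in outline and the key estimates check out, but it follows a genuinely different route from the paper. The paper centers its expansion on the \emph{exact} solution: after Gronwall it reduces to bounding $\mathbb{E}\big|\int_0^t(a(X_s)-a(X_{\lfloor s\rfloor}))\,{\rm d}s\big|^2$, then invokes the rough-path chain rule for $a(X)$ and the Skorohod--rough conversion formula (Lemma \ref{S}) to split this into a drift term, a Skorohod integral, an It\^o-type trace term and a $2$D Young correction $J_4$; the hard pieces are the Skorohod isometry for $J_2$ and the complementary-regularity bound for $J_4$, the latter resting on the Malliavin differentiability of the rough flow ($\mathcal{J}_t\mathcal{J}_u^{-1}$) from \cite{CL20199}. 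You instead Taylor-expand around the \emph{numerical} solution $Y_{\eta(s)}$, reduce everything to the single oscillatory integral $\mathcal{M}_t=\int_0^t a'(Y_{\eta(s)})(B_s-B_{\eta(s)})\,{\rm d}s$, and handle it by two Malliavin integrations by parts against the first-chaos increments. Because $Y_{t_k}$ is a smooth function of finitely many Gaussian increments, $DY_{t_k}$ and $D^2Y_{t_k}$ are explicit uniformly bounded \emph{step} functions, so your Cameron--Martin pairings are finite sums of rectangular increments of $R$; the conversion formula and the rough-path chain rule are not needed at all. Both routes ultimately rest on the same two covariance facts, which are exactly the paper's Lemma \ref{lm-1}: the cell-against-cell bound $\iint\|R\|_{V^{1/2H};[\lfloor t\rfloor,t]\times[\lfloor s\rfloor,s]}\,{\rm d}s\,{\rm d}t\le Ch^{2H+1}$ for your leading term, and the negative correlation of disjoint increments for $H<\tfrac12$, which gives $|R([0,t_k]\times[t_k,s])|\le Ch^{2H}$ and hence your $O(h^{2H})$ per delocalised pairing and $O(h^{4H})$ for the correction terms. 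What your approach buys is a more elementary stochastic-analysis layer tailored to the discreteness of the Euler scheme; what the paper's buys is a consistency estimate \eqref{lm} formulated purely in terms of $X$, hence independent of the particular scheme and closer to the multiplicative-noise machinery. One small imprecision: $DY_{t_k}$ is not ``$H^-$-H\"older like the fBm'' in its argument --- it is piecewise constant, hence of bounded $1$-variation --- but this only makes the pairings easier (they are finite sums controlled by the sign argument), so it does not affect your conclusion.
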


As $H$ tends to $\frac12$, the  strong convergence rate of the Euler scheme above goes to $1$, which is consistent with the classical result that the Euler--Maruyama scheme for SDEs driven by additive standard Brownian motions has strong order $1$ \cite[Chapter 1]{MT2004book}. Moreover, comparing with \cite{MLMC16,Harxiv,AAP2019}, Theorem \ref{main} reveals that  the strong convergence rate of the Euler scheme in  the above additive noise case is half order higher than those of the Euler-type schemes in the multiplicative noise case. In particular, if $a$ is linear, we have that the strong convergence rate of the Euler scheme is improved to $H+\frac12$; see Corollary \ref{cor}.  We remark that the results of Theorem \ref{main} and Corollary \ref{cor} can be extended directly to multi-dimensional cases.  
If the drift coefficient is bounded but less regular, we refer to \cite{BDG2021} for the optimal strong convergence rate of the Euler scheme in H\"older spaces.

The rest of the article is arranged as follows. In Section \ref{sec-2}, some preliminaries for the $2$D Young integral and the Malliavin calculus are introduced. In Section \ref{sec-3}, we prove the strong convergence rate of  the Euler scheme, i.e., Theorem \ref{main} and Corollary \ref{cor}. In Section \ref{sec-4}, numerical simulations are given to verify our theoretical analysis.

\section{$2$D Young integral and Malliavin calculus}\label{sec-2}
This section reviews basic concepts and results  about the $2$D Young integral and the Malliavin calculus associated to the fBm. 
We utilize $C$ as a generic constant and $G$ as a generic finite random variable, which may be different  from line to line. We will make use of  substripts to emphasize the parameters that they depend on.

\subsection{$2$D Young integral}
Let $U,W$ be Banach spaces with norms $\|\cdot\|_U$ and $\|\cdot\|_W$, respectively. We denote by $\mathcal{L}(U,W)$  the set of linear operators from $U$ to $W$. 

\begin{definition}
	For fixed $p\ge 1$ and $T>0$, the $p$-variation of $f:[0,T]\rightarrow U$ on $[s,t]\subseteq[0,T]$ is defined as 
	\begin{align*}
		\|f\|_{p\text{-}var;[s,t]}:=\sup_{\mathcal{P}\in\mathcal{D}([s,t])}\bigg(\sum_{k=0}^{N-1}\big\|f_{t_{k+1}}-f_{t_k}\big\|_U^p\bigg)^{1/p},
	\end{align*}
	where $\mathcal{P}=\{t_k:k=0,\cdots,N,~s=t_0<t_1<\cdots<t_N=t\}$ denotes a partition of $[s,t]$ and $\mathcal{D}([s,t])$ is the set of all such partitions. In addition, we define 
	\begin{align*}
		C^{p\text{-}var}(U;[0,T]):=\{f :\|f\|_{p\text{-}var;[0,T]}<+\infty\}.
	\end{align*}
\end{definition}

\begin{definition}
	Fix $p\ge 1$ and $T>0$. For $g:[0,T]^2\rightarrow U$, let
	\begin{align*}
		g([u_i,u_{i+1}]\times[v_j,v_{j+1}]):=g_{u_{i+1},v_{j+1}}-g_{u_{i+1},v_{j}}-g_{u_{i},v_{j+1}}+g_{u_{i},v_{j}}.
	\end{align*}
	The $p$-variation of $g$ on $[s,t]\times[u,v]\subseteq[0,T]^2$ is defined as 
	\begin{align*}
		\|g\|_{V^p;[s,t]\times[u,v]}:=\sup_{\pi\in\mathcal{D}([s,t]\times[u,v])}\bigg(\sum_{i,j}\big\|g([u_i,u_{i+1}]\times[v_j,v_{j+1}])\big\|_U^p\bigg)^{1/p},
	\end{align*}
	where $\pi=\{(u_i,v_j)\}$ is a partition of  $[s,t]\times[u,v]$ and $\mathcal{D}([s,t]\times[u,v])$ denotes the set of grid-like partitions of  $[s,t]\times[u,v]$. Moreover, we define
	\begin{align*}
		C^{p\text{-}var}(U;[0,T]^2):=\{g:\|g\|_{V^p;[0,T]^2}<+\infty\}.
	\end{align*}
\end{definition}

\begin{remark}\label{r1}
	For $f:[0,T]\rightarrow U$, the $\beta$-H\"older semi-norm of $f$ on $[s,t]\subseteq[0,T]$  is denoted by
	\begin{align*}
		\|f\|_{\beta;[s,t]}:=\sup_{s\le u<v\le t}\frac{\|f_v-f_u\|_U}{|u-v|^\beta}.
	\end{align*}
	If $\|f\|_{\beta;[0,T]}<+\infty$, then we have $f\in C^{{1/\beta}\text{-}var}(U;[0,T])$. Moreover, if $g$ also satisfies $\|g\|_{\beta;[0,T]}<+\infty$, then the $1/\beta$-variation of the function $fg:(r_1,r_2)\mapsto f_{r_1}g_{r_2}$ 
	defined on $[0,T]^2$ is finite.
\end{remark}

\begin{definition}
	Assume $f\in C^{p\text{-}var}(U,[0,T]^2)$ and $g\in C^{q\text{-}var}(W,[0,T]^2)$. If $$\frac1p+\frac1q>1,$$ then we say that  
	$f$ and $g$ have complementary regularity.
\end{definition}

\begin{lemma}(\cite{FV20111,To2002})\label{Young}
	Given $f:[0,T]^2\rightarrow \mathcal{L}(U,W)$ and $g:[0,T]^2\rightarrow U$. Then the following $2$D Young integral is defined as
	\begin{align*}
		\int_{[0,T]^2}f_{r_1,r_2}{\rm d}g_{r_1,r_2}:=\lim_{|\pi|\rightarrow 0}\sum_{i,j}f_{u_i,v_j}		g([u_i,u_{i+1}]\times[v_j,v_{j+1}])
	\end{align*}
	if the limit above exists.
	Moreover, if $f$ and $g$ have complementary regularity, then 
	$\int_{[0,T]^2}f_{r_1,r_2}{\rm d}g_{r_1,r_2}$ exists, and it holds that 
	\begin{align*}
		\bigg\|\int_{[0,T]^2}f_{r_1,r_2}{\rm d}g_{r_1,r_2}\bigg\|_W
		\le C_{p,q}|||f|||_{V^p;[0,T]^2}\|g\|_{V^q;[0,T]^2},
	\end{align*}
	where 
	\begin{align*}
		|||f|||_{V^p;[0,T]^2}:=\|f_{0,0}\|_{\mathcal{L}(U,W)}+\|f_{0,\cdot}\|_{p\text{-}var;[0,T]}+\|f_{\cdot,0}\|_{p\text{-}var;[0,T]}+\|f\|_{V^p;[0,T]^2}.
	\end{align*}
	In particular,  the result can also be  restricted to $[s,t]\times[u,v]\subseteq[0,T]^2$. 
\end{lemma}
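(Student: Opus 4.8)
The plan is to prove existence of the limit and the quantitative bound together, through a two-dimensional Young--Loeve maximal inequality obtained by coarsening a grid partition one coordinate at a time. Write $\theta:=\frac1p+\frac1q>1$, fix a grid partition $\pi=\{(u_i,v_j)\}$ with cells $R_{ij}:=[u_i,u_{i+1}]\times[v_j,v_{j+1}]$, and abbreviate the Riemann sum by $S(\pi):=\sum_{i,j}f_{u_i,v_j}\,g(R_{ij})$. For a sub-rectangle $R$ set $\omega_f(R):=\|f\|_{V^p;R}^p$ and $\omega_g(R):=\|g\|_{V^q;R}^q$; both are super-additive under grid-like splittings of $R$, since grid partitions of the pieces reassemble into a grid partition of $R$. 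The whole argument amounts to controlling how $S(\pi)$ changes when the grid is refined.

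First I would record the effect of deleting a single node $u_\ast$ from the first-coordinate partition, keeping the second coordinate fixed. A telescoping computation identical to the one-dimensional case shows that, summed over all rows $j$, the change equals $\sum_j\big(f_{u_{i},v_j}-f_{u_{i-1},v_j}\big)\,g\big([u_{i},u_{i+1}]\times[v_j,v_{j+1}]\big)$, where $u_{i}=u_\ast$. The key observation is that this inner sum is itself a one-dimensional Young--Riemann sum in the variable $v$, with integrand $\phi(v):=f_{u_i,v}-f_{u_{i-1},v}$ and integrator $\psi(v):=g([u_i,u_{i+1}]\times[0,v])$. Since $\phi(v')-\phi(v)=f([u_{i-1},u_i]\times[v,v'])$ and $\psi(v')-\psi(v)=g([u_i,u_{i+1}]\times[v,v'])$ are genuine rectangular increments, the $p$-variation of $\phi$ and the $q$-variation of $\psi$ over $[0,T]$ are controlled by $\|f\|_{V^p}$ and $\|g\|_{V^q}$ on the vertical strip $[u_{i-1},u_{i+1}]\times[0,T]$. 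Applying the classical one-dimensional Young--Loeve estimate to this inner sum---proved by Young's selection argument, namely that among $N$ intervals a pigeonhole choice gives an interior node with $\omega(t_{i-1},t_{i+1})\le\frac{2}{N-1}\omega(s,t)$ for the super-additive control $\omega:=\omega_f+\omega_g$, so that deleting it costs at most $\omega(t_{i-1},t_{i+1})^\theta$ and the iterated errors sum to $C_\theta\,\zeta(\theta)\,\omega(s,t)^\theta$---bounds the single-node change by $\|f_{u_i,0}-f_{u_{i-1},0}\|_{\mathcal{L}(U,W)}\,\|g([u_i,u_{i+1}]\times[0,T])\|_U+C_\theta\|f\|_{V^p;\,\mathrm{strip}}\,\|g\|_{V^q;\,\mathrm{strip}}$, where the product form in each factor is recovered from the additive control $(\omega_f+\omega_g)^\theta$ by optimizing the scaling $f\mapsto\lambda f,\ g\mapsto\lambda^{-1}g$, under which $S(\pi)$ is invariant.

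With this refinement estimate in hand I would coarsen the grid in two stages. Coarsening $\pi_1$ down to $\{0,T\}$ by Young's selection in the first coordinate, the boundary terms accumulate into $\|f_{\cdot,0}\|_{p\text{-}var;[0,T]}$ tested against $\|g\|_{V^q;[0,T]^2}$, while the genuine two-dimensional terms accumulate into $\|f\|_{V^p;[0,T]^2}$ tested against $\|g\|_{V^q;[0,T]^2}$; what remains is $\sum_j f_{0,v_j}\,g([0,T]\times[v_j,v_{j+1}])$, a one-dimensional Young sum in $v$. Coarsening $\pi_2$ down to $\{0,T\}$ then produces the edge contribution $\|f_{0,\cdot}\|_{p\text{-}var;[0,T]}$ against $\|g\|_{V^q;[0,T]^2}$ and the trivial term $f_{0,0}\,g([0,T]^2)$, whose norm is at most $\|f_{0,0}\|_{\mathcal{L}(U,W)}\|g\|_{V^q;[0,T]^2}$. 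Assembling the four contributions gives exactly the bound $C_{p,q}\,|||f|||_{V^p;[0,T]^2}\,\|g\|_{V^q;[0,T]^2}$ for every grid partition. Existence of the integral then follows by a Cauchy argument: for two grid partitions one passes to their common refinement and bounds the discrepancy, cell by cell of the coarser grid, by $C_\theta\sum_{R}\omega(R)^\theta$, which tends to $0$ as $|\pi|\to0$ because $\theta>1$ and the controls are continuous; the uniform bound above then passes to the limit.

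I expect the main obstacle to be the two-dimensional bookkeeping of the middle step: correctly recognizing each single-coordinate refinement as a transverse one-dimensional Young sum whose coefficients and increments are themselves rectangular increments of $f$ and $g$, and tracking how the corner value $f_{0,0}$, the two edge variations $\|f_{\cdot,0}\|_{p\text{-}var}$ and $\|f_{0,\cdot}\|_{p\text{-}var}$, and the intrinsic two-dimensional variation $\|f\|_{V^p}$ each enter exactly once---this is precisely what forces the particular four-term shape of $|||f|||_{V^p;[0,T]^2}$. A secondary technical point is the scaling optimization that converts the additive control $(\omega_f+\omega_g)^\theta$ into the multiplicative bound, together with ensuring that the Cauchy estimate is genuinely mesh-vanishing rather than merely monotone along refinements, which is where the continuity of the controls---automatic for the H\"older-type functions of Remark \ref{r1}---is used.
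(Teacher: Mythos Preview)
The paper does not prove Lemma~\ref{Young}; it is quoted as a known result with references to Friz--Victoir and Towghi, and no argument is supplied. Your outline is essentially the standard two-dimensional Young--Loeve argument found in those references: coarsen the grid one coordinate at a time, recognize each single-node deletion as a transverse one-dimensional Young sum whose integrand and integrator are rectangular increments of $f$ and $g$, apply the classical Young selection/maximal inequality, and recover the multiplicative bound by the scaling trick. The way the corner value, the two edge $p$-variations, and the genuine $2$D $p$-variation of $f$ each appear once is exactly the mechanism that produces the four-term norm $|||f|||_{V^p;[0,T]^2}$, so your explanation of that point is correct and on target. The Cauchy argument for existence, with the caveat about continuity of the controls, is also the standard one. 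In short, there is nothing to compare against in the paper itself, and your sketch matches the proof in the cited literature.
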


\subsection{Malliavin calculus}
Let $(\Omega,\mathcal{F},\mathbb{P})$  be a probability space. 
\begin{definition}\label{fBm}
	The scalar-valued fractional Brownian motion $B=\{B_t\}_{t\in [0,T]}$ is a continuous centered Gaussian process with $B_0=0$ almost surely and the covariance
	\begin{align*}
		R_{s,t}:=\mathbb{E}\big[B_s B_t\big]=\frac12\bigg(t^{2H}+s^{2H}-|t-s|^{2H}\bigg),\quad \ s,t\in [0,T].
	\end{align*}
	Here, $H\in(0,1)$ is called the Hurst parameter of $B$.
\end{definition}

Based on Definition \ref{fBm}, the regularity of  the fBm, as well as the regularity of its covariance, is obtained.

\begin{lemma}\cite[Chapter 5]{Nualart}\label{R}
	For $H\in(0,1)$ and $p\ge 1$, there exists a constant $C=C_{p}$ such that 
	\begin{align*}
		\sup_{0\le s<t\le T}\frac{\|B_t-B_s\|_{L^p(\Omega)}}{|t-s|^H}\le C.
	\end{align*}
	Meanwhile, for any $\beta\in(0,H)$, there exists a nonnegative random variable $G=G_{\beta,T}\in L^p(\Omega)$ for all $p\ge 1$,  such that 
	$\|B\|_{\beta;[0,T]}\le G$ almost surely.
\end{lemma}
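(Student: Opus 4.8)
The plan is to treat the two assertions separately: the $L^p$ estimate on increments is a direct Gaussian moment computation, while the almost sure H\"older regularity with all moments finite follows from a continuity theorem combined with the Gaussian concentration of the resulting H\"older seminorm. First I would compute the variance of a generic increment. Since $B$ is a centered Gaussian process, $B_t-B_s$ is a centered Gaussian random variable, and using the covariance $R_{s,t}$ from Definition \ref{fBm} one finds
\[
\mathbb{E}\big[(B_t-B_s)^2\big]=R_{t,t}-2R_{s,t}+R_{s,s}=t^{2H}-\big(t^{2H}+s^{2H}-|t-s|^{2H}\big)+s^{2H}=|t-s|^{2H}.
\]
Hence $B_t-B_s$ has distribution $N(0,|t-s|^{2H})$. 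For a centered Gaussian variable $Z$ of variance $\sigma^2$ one has $\mathbb{E}[|Z|^p]=c_p\,\sigma^p$ with $c_p:=\mathbb{E}[|N(0,1)|^p]<\infty$, so
\[
\|B_t-B_s\|_{L^p(\Omega)}=c_p^{1/p}\,|t-s|^H
\]
uniformly in $0\le s<t\le T$, which is the first claim with $C=c_p^{1/p}$.

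For the H\"older regularity I would fix $\beta\in(0,H)$ and invoke a Kolmogorov-type continuity theorem, equivalently the Garsia--Rodemich--Rumsey inequality. The computation above gives $\mathbb{E}[|B_t-B_s|^p]=c_p\,|t-s|^{pH}$ for every $p\ge1$. Choosing $p$ large enough that $p(H-\beta)>1$, the GRR inequality yields a pointwise bound of the form $\|B\|_{\beta;[0,T]}\le C_{\beta,p}\,F^{1/p}$, where
\[
F:=\int_{[0,T]^2}\frac{|B_t-B_s|^p}{|t-s|^{\beta p+2}}\,\mathrm{d}s\,\mathrm{d}t .
\]
Taking expectations and using the moment bound gives $\mathbb{E}[F]=c_p\int_{[0,T]^2}|t-s|^{pH-\beta p-2}\,\mathrm{d}s\,\mathrm{d}t<\infty$, since the diagonal singularity is integrable precisely when $p(H-\beta)>1$. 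Consequently the seminorm $\|B\|_{\beta;[0,T]}$ is finite almost surely, and $B$ (in its continuous version) is $\beta$-H\"older on $[0,T]$.

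To promote this to membership in $L^p(\Omega)$ for \emph{every} $p\ge1$, I would appeal to Fernique's theorem. The map $\omega\mapsto\|B(\omega)\|_{\beta;[0,T]}$ is a measurable seminorm on the Gaussian path space which, by the previous step, is finite almost surely; Fernique's theorem then gives $\mathbb{E}\big[\exp(\lambda\|B\|_{\beta;[0,T]}^2)\big]<\infty$ for some $\lambda>0$. In particular every moment of $G:=\|B\|_{\beta;[0,T]}$ is finite, so $G\in L^p(\Omega)$ for all $p\ge1$, and by construction $\|B\|_{\beta;[0,T]}\le G$ almost surely, which is the second claim.

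The routine part is the Gaussian moment identity of the first step. The main obstacle is the second assertion: passing from the two-parameter pointwise moment bound to a single random H\"older constant valid simultaneously for all pairs $(s,t)$ requires the continuity/GRR machinery, and ensuring that this constant has finite moments of \emph{every} order relies crucially on the Gaussianity of $B$ through Fernique's theorem, rather than on the GRR estimate alone, which by itself only controls finitely many moments of $G$.
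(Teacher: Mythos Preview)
Your proof is correct. The paper does not actually prove this lemma: it is stated as a citation to \cite[Chapter 5]{Nualart} with no accompanying argument, so there is nothing in the paper to compare your approach against.

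One small comment on your closing remark: Fernique is a clean way to get all moments of $G=\|B\|_{\beta;[0,T]}$, but it is not strictly necessary. A single GRR application with fixed exponent $p$ indeed controls only the moments of $G$ up to order $p$, yet for any target moment $q$ you may simply rerun GRR with a larger exponent $p\ge q$ still satisfying $p(H-\beta)>1$, and then Jensen gives $\mathbb{E}[G^q]\le C_p^q\,\mathbb{E}[F^{q/p}]\le C_p^q(\mathbb{E}[F])^{q/p}<\infty$. If you do keep the Fernique route, note that the $\beta$-H\"older space is not separable, so one should either invoke the measurable-seminorm version of Fernique or observe that $B$ almost surely lies in the separable little H\"older space $c^{\beta}$ (since it is in fact $\beta'$-H\"older for some $\beta'\in(\beta,H)$); this is a routine technicality.
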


\begin{lemma}\cite[Example 1]{FV20111}\label{RR}
	For $H\in (0,\frac12]$, we have 
	\begin{align*}
		\quad R\in C^{1/2H\text{-}var}(\mathbb{R};[0,T]^2).
	\end{align*}
	More precisely, it holds that $\|R\|_{V^{1/2H};[s,t]^2}\le C_H|t-s|^{2H}$.
\end{lemma}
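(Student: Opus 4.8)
The plan is to reduce the two-dimensional $\frac{1}{2H}$-variation of $R$ to a one-dimensional total-variation estimate, exploiting that a rectangular increment of $R$ is itself a one-dimensional increment of an auxiliary covariance function. Write $p:=\frac{1}{2H}$, so $p\ge 1$ because $H\le\frac12$. Fix a grid-like partition of $[s,t]^2$ given by $s=x_0<\cdots<x_M=t$ in the first variable and $s=y_0<\cdots<y_N=t$ in the second; I must bound $\sum_{i,j}|R([x_i,x_{i+1}]\times[y_j,y_{j+1}])|^p$ uniformly in the partition. The starting point is the factorization
\[
R([x_i,x_{i+1}]\times[y_j,y_{j+1}])=F_i(y_{j+1})-F_i(y_j),\qquad F_i(y):=R_{x_{i+1},y}-R_{x_i,y},
\]
obtained by grouping the four corner terms; probabilistically $F_i(y)=\mathbb{E}\big[(B_{x_{i+1}}-B_{x_i})B_y\big]$ is the covariance of a fixed increment with $B_y$.

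For each fixed $i$, since $\{y_j\}$ is a partition of $[s,t]$ and $x\mapsto x^p$ is superadditive on $[0,\infty)$ for $p\ge1$, I get
\[
\sum_j|R([x_i,x_{i+1}]\times[y_j,y_{j+1}])|^p=\sum_j|F_i(y_{j+1})-F_i(y_j)|^p\le\Big(\sum_j|F_i(y_{j+1})-F_i(y_j)|\Big)^p\le \mathrm{TV}(F_i;[s,t])^p .
\]
Thus the whole matter reduces to estimating the total variation of $F_i$ on $[s,t]$.

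This is the step where $H\le\frac12$ enters decisively. I would check that $F_i$ is absolutely continuous on $[s,t]$ with
\[
F_i'(y)=H\big[|x_i-y|^{2H-1}\,\mathrm{sgn}(y-x_i)-|x_{i+1}-y|^{2H-1}\,\mathrm{sgn}(y-x_{i+1})\big],
\]
whose singularities at $y=x_i,x_{i+1}$ are integrable precisely because $2H-1\in(-1,0]$. Splitting $\int_s^t|F_i'|$ at $x_i$ and $x_{i+1}$ and integrating the explicit expression, the contribution of $(x_i,x_{i+1})$ equals $(x_{i+1}-x_i)^{2H}$, while on each outer interval the two negative-exponent powers combine so that, after dropping the favorably signed (nonpositive) boundary terms, the contribution is at most $\frac12(x_{i+1}-x_i)^{2H}$. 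Hence $\mathrm{TV}(F_i;[s,t])\le 2(x_{i+1}-x_i)^{2H}$. Combining the two displays and using $2Hp=1$,
\[
\sum_{i,j}|R([x_i,x_{i+1}]\times[y_j,y_{j+1}])|^p\le\sum_i\big(2(x_{i+1}-x_i)^{2H}\big)^p=2^p\sum_i(x_{i+1}-x_i)=2^p(t-s),
\]
so taking the supremum over partitions and the $p$-th root yields $\|R\|_{V^{1/2H};[s,t]^2}\le 2(t-s)^{2H}$, which is the asserted bound and in particular shows $R\in C^{1/2H\text{-}var}(\mathbb{R};[0,T]^2)$.

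I expect the main obstacle to be the total-variation estimate for $F_i$: the tempting route of using the density $\partial_u\partial_v R=-H(1-2H)|v-u|^{2H-2}$ of $R$ fails for $H<\frac12$, since this density is non-integrable across the diagonal, so rectangles meeting the diagonal cannot be treated by a direct double integral. The one-dimensional reduction above sidesteps exactly this difficulty, because $F_i'$ carries only the integrable singularity $|\cdot|^{2H-1}$; the same reduction also removes any need to assume that the two partitions of the grid coincide, so arbitrary grid-like partitions are handled simultaneously.
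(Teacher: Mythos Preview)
Your argument is correct. The paper itself does not supply a proof of this lemma but merely cites \cite[Example~1]{FV20111}; the reduction you carry out---collapsing the inner sum via superadditivity of $x\mapsto x^p$ and then bounding the total variation of $y\mapsto\mathbb{E}[(B_{x_{i+1}}-B_{x_i})B_y]$ by direct integration of its derivative---is precisely the computation found in that reference, and even recovers the explicit constant $C_H=2$.
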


Combining Lemmas \ref{Young}-\ref{RR}, for a function $f:[0,T]^2\rightarrow \mathbb{R}$ sharing a similar regularity of  $B$, i.e., $f\in C^{1/\beta\text{-}var}(\mathbb{R};[0,T]^2)$ with $\beta=H^-$, we obtain that
\begin{align*}
	\int_{[0,T]^2}f_{r_1,r_2}{\rm d}R_{r_1,r_2}
\end{align*}
is well-defined as long as $H\in(\frac13,\frac12)$. 
In the following, based on the $2$D Young integral, we introduce the Malliavin calculus associated to the fBm with Hurst parameter $H\in(\frac13,\frac12)$. 

Noticing 
\begin{align*}
	R_{s,t}=\int_{[0,s]\times[0,t]}{\rm d}R_{r_1,r_2}=\int_{[0,T]^2}\mathds{1}_{[0,s]}(r_1)\mathds{1}_{[0,t]}(r_2){\rm d}R_{r_1,r_2}
\end{align*}
with $\mathds{1}_{[0,t]}(\cdot)$ being the indicator function,
we consider the inner product
\begin{align*}
	\langle\mathds{1}_{[0,t]},\mathds{1}_{[0,s]}\rangle_{\mathcal{H}}:=R_{s,t},
\end{align*}
which yields a Hilbert space $\left(\mathcal{H},\langle\cdot,\cdot\rangle_{\mathcal{H}}\right)$ being the closure of the space of all step functions on $[0,T]$ with respect to $\langle\cdot,\cdot\rangle_{\mathcal{H}}$. 
\begin{definition}
	Given a random variable 
	\begin{align*}
		F=f(B_{t_1},\cdots,B_{t_N}),
	\end{align*}
	where $t_1,\cdots,t_N\in[0,T]$, and $f:\mathbb{R}^N\rightarrow \mathbb{R}$ is a bounded smooth function with  derivatives bounded up to any order, the Malliavin derivative of $F$ is defined by 
	\begin{align*}
		D_{\cdot}F:=\sum_{i=1}^{N}\frac{\partial f}{\partial x_i}(B_{t_1},\cdots,B_{t_N})\mathds{1}_{[0,t_i]}(\cdot).
	\end{align*}
	Furthermore, for $p\ge 1$, the space $\mathbb{D}^{1,p}$ is the closure of the set of random variables in terms of the norm 
	\begin{align*}
		\|F\|_{\mathbb{D}^{1,p}}:=\Big( \mathbb{E}\big[|F|^p\big] + \mathbb{E}\big[\|DF\|_{\mathcal{H}}^p\big]  \Big)^{\frac1p}.
	\end{align*}
\end{definition}

\begin{definition}
	Given an $\mathcal{H}$-valued random variable $\varphi\in L^2(\Omega;\mathcal{H})$ satisfying
	\begin{align*}
		\Big| \mathbb{E}\big[\langle\varphi,DF\rangle_{\mathcal{H}}\big]  \Big|\le C_\varphi \|F\|_{L^2(\Omega)},  \quad  F\in \mathbb{D}^{1,2},
	\end{align*}
	the adjoint operator $\delta$ of the derivative operator $D$ acts on $\varphi$ is $\delta(\varphi)\in L^2(\Omega;\mathbb{R})$  such that 
	\begin{align*}
		\mathbb{E}\big[\langle\varphi,DF\rangle_{\mathcal{H}}\big]=\mathbb{E}\big[F\delta(\varphi)\big]
	\end{align*}
	for all $F\in \mathbb{D}^{1,2}$. In this case, we say $\varphi\in {\rm Dom}(\delta)$.
	Furthermore, 
	the Skorohod integral of $\varphi$ with respect to $B$ is defined by 
	\begin{align*}
		\int_{0}^{T}\varphi_t \delta B_t:=\delta(\varphi).
	\end{align*}
	In particular, 
	for $t\in [0,T]$, $\int_{0}^{t}\varphi_u \delta B_u:=\delta\big(\varphi \mathds{1}_{[0,t]}\big)$. 
\end{definition}

%
%

On the other hand, the fBm with Hurst parameter $H\in(\frac14,\frac12)$ can be naturally  lifted to the rough path almost surely, which leads to the rough integral  $\int_{0}^{T}\varphi_t{\rm d}B_t$  and the solutions to SDEs in the sense of rough paths  \cite{MHFriz,Lyons}. 
In the sequel, we introduce the transformation formula for the Skorohod integral and the rough integral, which is essential for us in numerical analysis.

\begin{lemma}\cite{CL20199,ST}\label{S}
	Let $H\in(\frac13,\frac12)$. Assume that the first order derivative of the function $\Phi$ is bounded and that $\varphi$ solves
	\begin{align*}
		{\rm d}\varphi_t&=\Phi(\varphi_t) {\rm d}B_t
	\end{align*}
	in the sense of  rough path. Then it holds  almost surely that
	\begin{align*}
		\int_{0}^{T}\varphi_t{\rm d}B_t=\int_{0}^{T}\varphi_t\delta B_t+H\int_{0}^{T}\Phi(\varphi_s)s^{2H-1}{\rm d}s
		+\int_{[0,T]^2} \mathds{1}_{[0,r_2]}(r_1)\Big[D_{r_1}\varphi_{r_2}-\Phi(\varphi_{r_2})\Big]{\rm d}R_{r_1,r_2}.
	\end{align*}
\end{lemma}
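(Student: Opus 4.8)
The plan is to establish the transformation formula by combining the rough-path definition of $\int_0^T \varphi_t \, {\rm d}B_t$ with the Skorohod integral through the It\^o--Stratonovich type correction that arises in the Gaussian setting. I would first recall that since $\Phi$ has bounded first derivative and $\varphi$ solves the rough differential equation ${\rm d}\varphi_t = \Phi(\varphi_t)\,{\rm d}B_t$, the controlled-rough-path structure gives a Gubinelli derivative equal to $\Phi(\varphi_t)$, i.e. $\varphi$ admits the local expansion $\varphi_t - \varphi_s = \Phi(\varphi_s)(B_t - B_s) + O(|t-s|^{2\beta})$ for $\beta$ slightly below $H$. The rough integral is then the limit of the compensated Riemann sums $\sum_i \big[\varphi_{t_i}(B_{t_{i+1}} - B_{t_i}) + \Phi(\varphi_{t_i})\,\mathbb{B}_{t_i,t_{i+1}}\big]$, where $\mathbb{B}$ is the second-level iterated integral of the lifted fBm. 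The goal is to rewrite this limit in terms of the Skorohod integral $\delta(\varphi)$ plus explicit correction terms.

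The key step is to analyze the discrepancy between the rough (pathwise) Riemann sums and the Skorohod sums. Both involve the first-level term $\sum_i \varphi_{t_i}(B_{t_{i+1}} - B_{t_i})$, but the Skorohod integral subtracts the trace of the Malliavin derivative: on each interval the Skorohod increment equals $\varphi_{t_i}(B_{t_{i+1}} - B_{t_i})$ minus a contraction term built from $\langle D_\cdot \varphi_{t_i}, \mathds{1}_{[t_i,t_{i+1}]}\rangle_{\mathcal{H}}$. I would express this contraction using the $2$D Young integral against ${\rm d}R$, which is exactly the framework set up in Lemmas \ref{Young}--\ref{RR}: since $\varphi$ inherits the regularity of $B$ (so that $r \mapsto D_r \varphi$ and $R$ have complementary regularity when $H > \tfrac13$), the integral $\int_{[0,T]^2} \mathds{1}_{[0,r_2]}(r_1) D_{r_1}\varphi_{r_2}\,{\rm d}R_{r_1,r_2}$ is well-defined and captures the Malliavin contraction in the limit. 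The second-level rough term $\sum_i \Phi(\varphi_{t_i})\,\mathbb{B}_{t_i,t_{i+1}}$ must be matched against the remaining pieces: its symmetric part produces the drift correction $H\int_0^T \Phi(\varphi_s) s^{2H-1}\,{\rm d}s$ (which is $\tfrac12$ of the derivative of the variance $\mathbb{E}[B_s^2] = s^{2H}$, the analogue of the $\tfrac12$-factor in the standard It\^o correction), while its antisymmetric part is absorbed into the $2$D Young integral after subtracting the reference term $\Phi(\varphi_{r_2})$, which is precisely why the integrand appears as $D_{r_1}\varphi_{r_2} - \Phi(\varphi_{r_2})$.

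Concretely, I would proceed as follows. First, decompose the Malliavin derivative $D_{r_1}\varphi_{r_2}$ for $r_1 < r_2$ using the chain rule applied to the rough flow, obtaining $D_{r_1}\varphi_{r_2} = \Phi(\varphi_{r_1}) J_{r_1 \to r_2} + (\text{lower order})$ where $J$ is the Jacobian of the flow, and observe that to leading order $D_{r_1}\varphi_{r_2} \to \Phi(\varphi_{r_2})$ as $r_1 \uparrow r_2$; this justifies that the combination $D_{r_1}\varphi_{r_2} - \Phi(\varphi_{r_2})$ is of the right order for the $2$D Young integral to converge. Second, write the rough integral as the limit of Riemann--Stratonovich sums and insert the identity $\varphi_{t_i}(B_{t_{i+1}} - B_{t_i}) = \delta(\varphi \mathds{1}_{[t_i,t_{i+1}]}) + \langle D\varphi_{t_i}, \mathds{1}_{[t_i,t_{i+1}]}\rangle_{\mathcal{H}}$, which follows from the integration-by-parts / product rule for the divergence operator. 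Third, take the mesh to zero and identify each resulting limit: the divergence sums converge to $\delta(\varphi) = \int_0^T \varphi_t \,\delta B_t$, and the inner-product sums split into the diagonal contribution giving the $s^{2H-1}$ drift and the off-diagonal contribution giving the $2$D Young integral.

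\textbf{Main obstacle.} I expect the hard part to be the rigorous passage to the limit in the second-level and contraction terms simultaneously: one must show that the discretized Malliavin contractions converge to the $2$D Young integral in $L^2(\Omega)$ (not merely almost surely), which requires uniform $p$-variation estimates on $r \mapsto D_r \varphi$ that hold in the appropriate $L^p(\Omega)$ sense, and one must verify that the symmetrization producing the $H s^{2H-1}$ term does not leave residual cross-terms. This is delicate precisely because, unlike the $H = \tfrac12$ case, the increments of $B$ are correlated, so the "diagonal" and "off-diagonal" parts of the covariance do not cleanly separate and one must rely on the complementary-regularity bound of Lemma \ref{Young} to control the coupling. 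For this reason I would, in the final write-up, cite the established versions of this formula in \cite{CL20199,ST} for the convergence of the individual sums and focus the argument on the bookkeeping that matches their decomposition to the three explicit terms in the statement.
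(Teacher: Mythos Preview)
The paper does not prove this lemma at all: it is stated with the citation \cite{CL20199,ST} and used as a black box, so there is no ``paper's own proof'' to compare against. Your sketch is therefore not in conflict with anything in the paper, and in fact it is broadly in line with how the cited references (Cass--Lim and Song--Tindel) derive such formulas: one writes the rough integral as a limit of compensated sums, invokes the divergence product rule $F\,\delta(h)=\delta(Fh)+\langle DF,h\rangle_{\mathcal H}$ on each increment, and then identifies the accumulated contraction $\langle D\varphi_{t_i},\mathds 1_{[t_i,t_{i+1}]}\rangle_{\mathcal H}$ in the limit as a $2$D Young integral against ${\rm d}R$, with the diagonal singularity of $D_{r_1}\varphi_{r_2}$ handled by subtracting $\Phi(\varphi_{r_2})$ and compensating with the explicit term $H\int_0^T\Phi(\varphi_s)s^{2H-1}\,{\rm d}s$.

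One point in your outline is slightly misplaced: you attribute the $H s^{2H-1}$ drift to the ``symmetric part of $\mathbb B$'' coming from the second-level rough term. In one dimension the second level is simply $\tfrac12(B_t-B_s)^2$ and does not by itself produce that integral; the $H s^{2H-1}$ term arises instead from the Malliavin side, as the explicit evaluation of the piece $\int_{[0,T]^2}\mathds 1_{[0,r_2]}(r_1)\Phi(\varphi_{r_2})\,{\rm d}R_{r_1,r_2}$ that you add back after regularising the integrand. This is a bookkeeping issue rather than a structural one, but if you were to write out the argument you would want to track that term through the contraction, not through the L\'evy-area correction. Given that the paper simply cites the result, the cleanest option here is to do the same and invoke \cite{CL20199,ST} directly.
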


\section{Convergence analysis on the Euler scheme}\label{sec-3}

In this section, we set $h=\frac{T}{n}$ and $t_k=kh$, $k=0,\cdots,n$. For $t\in(t_k,t_{k+1}]$, define $\lfloor t\rfloor:=t_k$ and $\lceil t \rceil:=t_{k+1}$. Before proving the main results, we give lemmas for the solution of  \eqref{sde} and the covariance of the fBm.

\begin{lemma}\label{lm-3}
	Assume that the derivative of $a$ is bounded. Then \eqref{sde} admits a unique solution satisfying
	\begin{align*}
		\mathbb{E}\bigg[\sup_{\tau\in[0,T]}|X_{\tau}|^p\bigg]+\mathbb{E}\Big[\|X\|^p_{\beta;[0,T]}\Big]\le C, \quad p\ge 1,~\beta<H.
	\end{align*}
\end{lemma}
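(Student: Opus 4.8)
The plan is to exploit the additive structure of the noise to turn \eqref{sde} into a pathwise random ordinary differential equation, and then to extract the moment bounds from Gronwall's inequality together with the fBm estimates of Lemma \ref{R}. Concretely, I would introduce the shifted process $\widetilde{X}_t := X_t - \sigma B_t$ and observe that $X$ solves \eqref{sde} if and only if, for almost every $\omega$, the path $t\mapsto \widetilde{X}_t(\omega)$ solves the ODE $\tfrac{{\rm d}}{{\rm d}t}\widetilde{X}_t = a\big(\widetilde{X}_t+\sigma B_t(\omega)\big)$ with $\widetilde{X}_0=X_0$.

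Since $a$ has bounded derivative, the map $x\mapsto a(x+\sigma B_t(\omega))$ is globally Lipschitz with constant $\|a'\|_{\infty}$ uniformly in $t$, and it is continuous in $t$ because $B$ has continuous sample paths (Lemma \ref{R}). The classical Cauchy--Lipschitz theorem then gives, for each $\omega$, a unique maximal solution $\widetilde{X}(\omega)$, and the at most linear growth $|a(x)|\le |a(0)|+\|a'\|_{\infty}|x|$ rules out blow-up, so the solution exists on all of $[0,T]$. Setting $X_t:=\widetilde{X}_t+\sigma B_t$ produces the unique solution of \eqref{sde}; uniqueness for \eqref{sde} is inherited from uniqueness for the random ODE, and joint measurability of $X$ in $(\omega,t)$ follows from the standard continuous dependence of ODE solutions on parameters.

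For the supremum bound I would use the integral form $\widetilde{X}_t = X_0 + \int_0^t a\big(\widetilde{X}_s+\sigma B_s\big)\,{\rm d}s$ and the linear growth of $a$ to get
\begin{align*}
	\sup_{\tau\in[0,t]}\big|\widetilde{X}_\tau\big|\le |X_0|+T|a(0)|+T\|a'\|_{\infty}|\sigma|\sup_{\tau\in[0,T]}|B_\tau|+\|a'\|_{\infty}\int_0^t\sup_{\tau\in[0,s]}\big|\widetilde{X}_\tau\big|\,{\rm d}s,
\end{align*}
so Gronwall's lemma yields $\sup_{\tau\in[0,T]}\big|\widetilde{X}_\tau\big|\le C\big(1+\sup_{\tau\in[0,T]}|B_\tau|\big)$, hence the same bound for $X$. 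Taking $p$-th moments and using that $\sup_{\tau\in[0,T]}|B_\tau|\le T^{\beta}\|B\|_{\beta;[0,T]}$ lies in every $L^p(\Omega)$ by Lemma \ref{R} gives $\mathbb{E}\big[\sup_{\tau}|X_\tau|^p\big]\le C$. For the H\"older part, from $X_t-X_s=\int_s^t a(X_u)\,{\rm d}u+\sigma(B_t-B_s)$, the linear growth of $a$, and $|t-s|\le T^{1-\beta}|t-s|^{\beta}$ (valid since $\beta<1$), I would obtain
\begin{align*}
	\|X\|_{\beta;[0,T]}\le T^{1-\beta}\Big(|a(0)|+\|a'\|_{\infty}\sup_{\tau\in[0,T]}|X_\tau|\Big)+|\sigma|\,\|B\|_{\beta;[0,T]},
\end{align*}
and raising to the $p$-th power, inserting the supremum estimate just proved, and invoking the $L^p$-integrability of $\|B\|_{\beta;[0,T]}$ from Lemma \ref{R} gives $\mathbb{E}\big[\|X\|_{\beta;[0,T]}^p\big]\le C$.

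There is no genuine obstacle here: the additive noise removes any need for stochastic or rough integration in establishing well-posedness, so the argument is purely deterministic conditionally on the sample path of $B$, and the only probabilistic input is the moment control on $\|B\|_{\beta;[0,T]}$ supplied by Lemma \ref{R}. The mildest care is needed only in checking that the pathwise ODE solution depends measurably on $\omega$, which is a standard consequence of the continuous dependence of ODE solutions on their coefficients.
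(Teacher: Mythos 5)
Your proposal is correct and follows essentially the same route as the paper: the moment and H\"older bounds are obtained by exactly the same Gronwall argument on $\sup_{\tau\in[0,t]}|X_\tau|$ combined with the $L^p$-integrability of $\sup_\tau|B_\tau|$ and $\|B\|_{\beta;[0,T]}$ from Lemma \ref{R}. The only (cosmetic) difference is in the well-posedness step, where you shift by $\sigma B$ and invoke Cauchy--Lipschitz for the resulting random ODE, while the paper runs a contraction-mapping argument directly on the integral equation; both are standard and equivalent for additive noise.
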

\begin{proof}
	Since $a$ has bounded derivative, the existence and uniqueness of the solution to \eqref{sde} is deduced from a standard argument by the contractive mapping principle. Moreover, based on
	\begin{align*}
		\sup_{\tau\in[0,t]}|X_{\tau}|&\le |X_0|+\int_{0}^{t}\sup_{\tau\in[0,s]}|a(X_\tau)|{\rm d}s+\sigma \sup_{\tau\in[0,t]}\big|B_{\tau}\big|\\
		&\le |X_0|+C\int_{0}^{t}\bigg(1+\sup_{\tau\in[0,s]}|X_\tau|\bigg){\rm d}s+\sigma \sup_{\tau\in[0,T]}\big|B_{\tau}\big|,
	\end{align*}
	Gronwall's inequality gives
	\begin{align*}
		\sup_{\tau\in[0,t]}|X_{\tau}|\le C \bigg(1+\sup_{\tau\in[0,T]}\big|B_{\tau}\big|\bigg).
	\end{align*}
	Then Lemma \ref{R} yields
	\begin{align*}
		\mathbb{E}\bigg[\sup_{\tau\in[0,T]}|X_{\tau}|^p\bigg]\le C, \quad p\ge 1.
	\end{align*}
	
	On the other hand, we have
	\begin{align*}
		|X_{t}-X_s|&\le \int_{s}^{t}|a(X_\tau)|{\rm d}\tau+\sigma \big|B_{t}-B_s\big|\\
		&\le C\int_{s}^{t}\big(1+|X_\tau|\big){\rm d}\tau+\sigma \big|B_{t}-B_s\big|\\
		&\le C \bigg(1+\sup_{\tau\in[0,T]}|B_{\tau}|\bigg)|t-s|+\sigma \big|B_{t}-B_s\big|,
	\end{align*}
	which implies
	\begin{align*}
		\mathbb{E}\Big[\|X\|^p_{\beta;[0,T]}\Big]\le C,
	\end{align*}
	for any $p\ge 1$ and $\beta<H$.
\end{proof}

\begin{lemma}\label{lm-1}
	Let $R$ be the covariance of the fractional Brownian motion $B$ with Hurst parameter $H\in (0,\frac12)$. Then it holds that
	\begin{align}
		\int_{0}^{T}\int_{0}^{T}\|R\|_{V^{1/2H};[\lfloor t\rfloor,t]\times[\lfloor s\rfloor,s]}{\rm d}s{\rm d}t\le Ch^{2H+1},\label{R2}
	\end{align}
	and 
	\begin{align}
		\int_{0}^{T} \|R\|_{V^{1/2H};[\lfloor t\rfloor,t]^2}{\rm d}t
		+\int_{0}^{T} \|R\|_{V^{1/2H};[0,\lfloor t\rfloor]\times[\lfloor t\rfloor,t]}{\rm d}t
		\le Ch^{2H}.\label{R22}
	\end{align}
\end{lemma}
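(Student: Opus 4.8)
The plan is to reduce both estimates to the sharp local bound on the $1/2H$-variation of the fractional covariance supplied by Lemma \ref{RR}, namely $\|R\|_{V^{1/2H};[a,b]^2}\le C_H|b-a|^{2H}$, together with elementary bounds on rectangular increments of $R$ away from the diagonal. For \eqref{R2}, I would first observe that when $t\in(t_k,t_{k+1}]$ and $s\in(t_j,t_{j+1}]$, the rectangle $[\lfloor t\rfloor,t]\times[\lfloor s\rfloor,s]$ is contained in $[t_k,t_{k+1}]\times[t_j,t_{j+1}]$, and monotonicity of $p$-variation in the domain gives $\|R\|_{V^{1/2H};[\lfloor t\rfloor,t]\times[\lfloor s\rfloor,s]}\le\|R\|_{V^{1/2H};[t_k,t_{k+1}]\times[t_j,t_{j+1}]}$. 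On the diagonal blocks $k=j$ this is at most $C h^{2H}$ by Lemma \ref{RR}; on the off-diagonal blocks I need a bound on $\|R\|_{V^{1/2H};[t_k,t_{k+1}]\times[t_j,t_{j+1}]}$ for $k\neq j$. Here $R$ is smooth (indeed $C^\infty$ away from the diagonal), so the rectangular increment over a box of side $h$ at distance $|k-j|h$ from the diagonal is controlled by $h^2$ times $\sup|\partial^2_{r_1 r_2}R|$ on that box, and $|\partial^2_{r_1 r_2}R_{r_1,r_2}|=H(2H-1)|r_1-r_2|^{2H-2}\le C(|k-j|h)^{2H-2}$; since such a box has a single non-trivial rectangular increment, its $1/2H$-variation equals that increment, giving $\|R\|_{V^{1/2H};[t_k,t_{k+1}]\times[t_j,t_{j+1}]}\le C h^{2H}(|k-j|)^{2H-2}$ (absorbing the powers of $h$). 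Summing: $\sum_{k,j}\cdots\le C h^{2H}\big(n+\sum_{k\neq j}|k-j|^{2H-2}\big)\le C h^{2H}\cdot n = C h^{2H-1}$ since $2H-2<-1$ makes the off-diagonal sum bounded uniformly in $n$; multiplying by $h^2$ from the $\mathrm{d}s\,\mathrm{d}t$ integration over blocks of area $h^2$ yields $C h^{2H+1}$, which is \eqref{R2}.

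For \eqref{R22}, the first term is immediate: by Lemma \ref{RR}, $\|R\|_{V^{1/2H};[\lfloor t\rfloor,t]^2}\le\|R\|_{V^{1/2H};[t_k,t_{k+1}]^2}\le C h^{2H}$ for $t\in(t_k,t_{k+1}]$, and integrating the constant bound over $[0,T]$ gives $C h^{2H}$. For the second term I need to estimate $\|R\|_{V^{1/2H};[0,t_k]\times[t_k,t_{k+1}]}$: this is a "tall thin" rectangle of width $h$ whose closure touches the diagonal only at the corner $(t_k,t_k)$. I would split $[0,t_k]$ into the dyadic-type pieces $[t_{k-1},t_k]$ and $[0,t_{k-1}]$ (or more carefully into $[0,t_{j}]$-style blocks), bounding the contribution near the corner by Lemma \ref{RR} and the far part by the smooth second-derivative estimate as above; alternatively, and more cleanly, I would estimate the $1/2H$-variation over $[0,t_k]\times[t_k,t_{k+1}]$ block by block over the $k$ vertical strips $[t_{i},t_{i+1}]\times[t_k,t_{k+1}]$, $i=0,\dots,k-1$, using $\|R\|_{V^{1/2H}}$ is super-additive-controlled so that the total $1/2H$-variation is at most $C\big(\sum_i \|R\|_{V^{1/2H};[t_i,t_{i+1}]\times[t_k,t_{k+1}]}^{1/2H}\big)^{2H}$; with the block bound $C h^{2H}(k-i)^{2H-2}$ from the off-diagonal argument (and $C h^{2H}$ for the corner block $i=k-1$), the sum $\sum_{i=0}^{k-1}\big(h^{2H}(k-i)^{2H-2}\big)^{1/2H}=h\sum_{m\ge1}m^{(2H-2)/2H}$ is finite (since $(2H-2)/2H<-1$ for $H<1$, but one must check $H>\tfrac13$ is not needed here — actually $(2H-2)/(2H)<-1\iff 2H-2<-2H\iff H<\tfrac12$, which holds), so each $\|R\|_{V^{1/2H};[0,t_k]\times[t_k,t_{k+1}]}\le C h^{2H}$ uniformly in $k$, and integrating over $t$ gives $C h^{2H}$.

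The main obstacle I anticipate is the off-diagonal rectangular-increment estimate for $R$ and, relatedly, correctly handling the $1/2H$-variation as one crosses or approaches the diagonal: the exponent $1/2H>1$ means $p$-variation is \emph{sub}additive only in a nonstandard sense, so one must be careful to use the correct sewing/superadditivity of the control $\omega(A)=\|R\|_{V^{1/2H};A}^{1/2H}$ rather than naively adding variations. One should invoke the standard fact (as in \cite{FV20111,FV2010book}) that $\|R\|_{V^{1/2H};A}^{1/2H}$ is a superadditive rectangular control, so the variation over a union of boxes is bounded by the sum over the boxes of the $(1/2H)$-powers, raised to the power $2H$; combining this with the two regimes — diagonal blocks ($\le C h^{2H}$ each, of which there are $O(n)$ or $O(1)$ depending on the estimate) and off-diagonal blocks (with the summable weight $|k-j|^{2H-2}$) — is what makes all three sums converge with the claimed powers of $h$. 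Everything else is routine calculus on the explicit covariance $R_{s,t}=\tfrac12(t^{2H}+s^{2H}-|t-s|^{2H})$.
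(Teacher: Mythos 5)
Your overall architecture matches the paper's: split \eqref{R2} into diagonal blocks (handled by Lemma \ref{RR}) and off-diagonal blocks, and on the off-diagonal blocks reduce the $1/2H$-variation to the single full rectangular increment $|\mathbb{E}[(B_b-B_a)(B_d-B_c)]|=H(1-2H)\int_a^b\int_c^d|v-u|^{2H-2}\,{\rm d}v\,{\rm d}u$, whose integrability then gives the claimed powers of $h$ (the paper evaluates the resulting integrals exactly rather than summing $|k-j|^{2H-2}$ over blocks, but that is cosmetic). However, two of your justifications are not correct as stated. First, an off-diagonal box does \emph{not} ``have a single non-trivial rectangular increment'': any grid-like partition of $[t_k,t_{k+1}]\times[t_j,t_{j+1}]$ produces many nonzero sub-increments, and the supremum in the definition of $\|R\|_{V^{1/2H}}$ ranges over all of them. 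The reason the variation nevertheless collapses to the full increment is the sign: since $\partial^2_{r_1r_2}R=H(2H-1)|r_1-r_2|^{2H-2}<0$ off the diagonal, every sub-rectangular increment is negative, so $\sum_{i,j}|R([u_i,u_{i+1}]\times[v_j,v_{j+1}])|^{1/2H}\le\big|\sum_{i,j}R([u_i,u_{i+1}]\times[v_j,v_{j+1}])\big|^{1/2H}$ because $1/2H>1$. This is exactly the step the paper makes explicit, and your argument needs it.

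Second, for the term $\|R\|_{V^{1/2H};[0,\lfloor t\rfloor]\times[\lfloor t\rfloor,t]}$ your ``cleaner'' strip decomposition invokes an inequality of the form $\|R\|_{V^{p};\cup_iA_i}^{p}\le C\sum_i\|R\|_{V^{p};A_i}^{p}$ with the \emph{same} exponent on both sides. This is not the superadditivity of controls (which runs in the opposite direction, $\sum_i\omega(A_i)\le\omega(\cup_iA_i)$), and for $2$D $p$-variation with $p>1$ it is known that $V^p$ and controlled $p$-variation are not equivalent in general, so this step is not justified as written; a finite split into $k$ strips via the triangle inequality costs a factor $k^{p-1}$, which is not uniform in $k$. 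The fix is again the sign argument: the whole rectangle $[0,\lfloor t\rfloor]\times[\lfloor t\rfloor,t]$ lies (up to its corner) off the diagonal, so all its sub-increments are negative and $\|R\|_{V^{1/2H};[0,\lfloor t\rfloor]\times[\lfloor t\rfloor,t]}\le\big|\mathbb{E}[(B_t-B_{\lfloor t\rfloor})(B_{\lfloor t\rfloor}-B_0)]\big|$ directly, after which the integral computation gives $Ch^{2H}$; this is how the paper proceeds. With these two repairs your proof is correct and coincides with the paper's.
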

\begin{proof}
	We decomposite 
	\begin{align*}
		\int_{0}^{T}\int_{0}^{T}\|R\|_{V^{1/2H};[\lfloor t\rfloor,t]\times[\lfloor s\rfloor,s]}{\rm d}s{\rm d}t
	\end{align*}
	into 
	\begin{align*}
		&\int_{0}^{T}\int_{0}^{T}\|R\|_{V^{1/2H};[\lfloor t\rfloor,t]\times[\lfloor s\rfloor,s]}\mathds{1}_{[\lfloor t\rfloor,\lceil t \rceil]}(s){\rm d}s{\rm d}t\\
		+
		&\int_{0}^{T}\int_{0}^{T}\|R\|_{V^{1/2H};[\lfloor t\rfloor,t]\times[\lfloor s\rfloor,s]}\mathds{1}_{[0,T]\backslash[\lfloor t\rfloor,\lceil t \rceil]}(s){\rm d}s{\rm d}t=:I_{1}+I_{2}.
	\end{align*}
	By means of Lemma \ref{RR}, 	we get 
	\begin{align*}
		I_{1}\le \int_{0}^{T}\int_{0}^{T}h^{2H}\mathds{1}_{[\lfloor t\rfloor,\lceil t \rceil]}(s){\rm d}s{\rm d}t
		\le Ch^{2H+1}.
	\end{align*}
	For the part $I_{2}$,
	notice that if $s\notin [\lfloor t\rfloor,\lceil t \rceil]$, then the sets $[\lfloor t\rfloor,\lceil t \rceil]$ and $[\lfloor s\rfloor,\lceil s \rceil]$ are essentially disjoint. We claim that for any two essentially disjoint sets, $[a,b]$ and $[c,d]$ with $a<b\le c<d$, the covariance of the increments of the fBm is negative. Indeed, due to $H<1/2$, it holds that
	\begin{align*}
		&\mathbb{E}\Big[\big(B_b-B_a\big)\big(B_d-B_c\big)\Big]\\
		=&\frac12\Big[(d-a)^{2H}-(d-b)^{2H}+(c-b)^{2H}-(c-a)^{2H}\Big]\\
		=&H\bigg(\int_{a}^{b}(d-u)^{2H-1}{\rm d}u-\int_{a}^{b}(c-u)^{2H-1}{\rm d}u\bigg)\\
		=&H(2H-1)\bigg(\int_{a}^{b}\int_{c}^{d}(v-u)^{2H-2}{\rm d}v{\rm d}u\bigg)< 0.
	\end{align*}
	It then leads to
	\begin{align*}
		\|R\|^{1/2H}_{V^{1/2H};[\lfloor t\rfloor,t]\times[\lfloor s\rfloor,s]}&=\sup_{\pi}\sum_{i,j}\big|R([u_i,u_{i+1}]\times[v_j,v_{j+1}])\big|^{1/2H}\\
		&\le \sup_{\pi}\bigg|\sum_{i,j}R([u_i,u_{i+1}]\times[v_j,v_{j+1}])\bigg|^{1/2H}\\
		&=\bigg|\mathbb{E}\Big[\big(B_s-B_{\lfloor s\rfloor}\big)\big(B_t-B_{\lfloor t\rfloor}\big)\Big]\bigg|^{1/2H}
	\end{align*}
	with $\pi=\{(u_i,v_j)\}$ being
	a partition of $[\lfloor t\rfloor,t]\times[\lfloor s\rfloor,s]$, which yields 
	\begin{align*}
		\|R\|_{V^{1/2H};[\lfloor t\rfloor,t]\times[\lfloor s\rfloor,s]}&\le \bigg|\mathbb{E}\Big[\big(B_s-B_{\lfloor s\rfloor}\big)\big(B_t-B_{\lfloor t\rfloor}\big)\Big]\bigg|\\
		&=H(1-2H)\bigg(\int_{\lfloor t\rfloor}^{t}\int_{\lfloor s\rfloor}^{s}|v-u|^{2H-2}{\rm d}v{\rm d}u\bigg).
	\end{align*}
	Then we obtain 
	\begin{align*}
		I_{2}&\le C\int_{0}^{T}\int_{0}^{T}\int_{\lfloor t\rfloor}^{t}\int_{\lfloor s\rfloor}^{s}|v-u|^{2H-2}{\rm d}v{\rm d}u\mathds{1}_{[0,T]\backslash[\lfloor t\rfloor,\lceil t \rceil]}(s){\rm d}s{\rm d}t\\
		&=C\sum_{i=0}^{n-1}\int_{t_i}^{t_{i+1}}\bigg(\int_{0}^{t_i}+\int_{t_{i+1}}^{T}\bigg)\int_{ t_i}^{t}\int_{\lfloor s\rfloor}^{s}|v-u|^{2H-2}{\rm d}v{\rm d}u{\rm d}s{\rm d}t\\
		&= C\sum_{i=0}^{n-1}\int_{t_i}^{t_{i+1}}\int_{ u}^{t_{i+1}}\bigg(\int_{0}^{t_i}+\int_{t_{i+1}}^{T}\bigg)\int_{\lfloor s\rfloor}^{s}|v-u|^{2H-2}{\rm d}v{\rm d}s{\rm d}t{\rm d}u\\
		&=C\sum_{i=0}^{n-1}\int_{t_i}^{t_{i+1}}(t_{i+1}-u)\bigg(\int_{0}^{t_i}+\int_{t_{i+1}}^{T}\bigg)\int_{\lfloor s\rfloor}^{s}|v-u|^{2H-2}{\rm d}v{\rm d}s{\rm d}u\\
		&=C\sum_{i=0}^{n-1}\int_{t_i}^{t_{i+1}}(t_{i+1}-u)\bigg(\int_{0}^{t_i}+\int_{t_{i+1}}^{T}\bigg)\int_{v}^{\lceil v \rceil}|v-u|^{2H-2}{\rm d}s{\rm d}v{\rm d}u\\
		&=C\sum_{i=0}^{n-1}\int_{t_i}^{t_{i+1}}\bigg(\int_{0}^{t_i}+\int_{t_{i+1}}^{T}\bigg)(t_{i+1}-u)(\lceil v \rceil -v)| v-u|^{2H-2}{\rm d}v{\rm d}u\\
		&\le C h^2 \sum_{i=0}^{n-1}\int_{t_i}^{t_{i+1}}\bigg(\int_{0}^{t_i}+\int_{t_{i+1}}^{T}\bigg)| v-u|^{2H-2}{\rm d}v{\rm d}u.
	\end{align*}
	By direct calculations,  we derive
	\begin{align*}
		&\sum_{i=0}^{n-1}\int_{t_i}^{t_{i+1}}\bigg(\int_{0}^{t_i}+\int_{t_{i+1}}^{T}\bigg)| v-u|^{2H-2}{\rm d}v{\rm d}u\\
		=& \sum_{i=0}^{n-1}\int_{t_i}^{t_{i+1}}\bigg(\int_{0}^{t_i}(u-v)^{2H-2}{\rm d}v+\int_{t_{i+1}}^{T}(v-u)^{2H-2}{\rm d}v\bigg){\rm d}u\\
		=&\frac{1}{1-2H} \sum_{i=0}^{n-1}\int_{t_i}^{t_{i+1}}\bigg((u-t_i)^{2H-1}-u^{2H-1}+(t_{i+1}-u)^{2H-1}
		-(T-u)^{2H-1}\bigg){\rm d}u\\
		=&\frac{1}{1-2H}\bigg( \sum_{i=0}^{n-1}\int_{t_i}^{t_{i+1}}\Big((u-t_i)^{2H-1}+(t_{i+1}-u)
		^{2H-1}\Big){\rm d}u\bigg)\\
		&-\frac{1}{1-2H}\int_{0}^{T}\Big(u^{2H-1}
		+(T-u)^{2H-1}\Big){\rm d}u\\
		=&\frac{1}{2H(1-2H)} \Bigg[\bigg(\sum_{i=0}^{n-1}2h^{2H}\bigg)-2T^{2H}\Bigg]
		\le C h^{2H-1},
	\end{align*}
	which completes the proof of \eqref{R2}. 
	
	Similarly, we have 
	\begin{align*}
		\int_{0}^{T} \|R\|_{V^{1/2H};[0,\lfloor t\rfloor]\times[\lfloor t\rfloor,t]}{\rm d}t
		&=\int_{0}^{T}\bigg|\mathbb{E}\Big[\big(B_{t}-B_{\lfloor t \rfloor}\big)\big(B_{\lfloor t \rfloor}-B_0\big)\Big]\bigg|{\rm d}t\\
		&=C\int_{0}^{T}\int_{\lfloor t\rfloor}^{t}\int_{0}^{\lfloor t\rfloor}|v-u|^{2H-2}{\rm d}v{\rm d}u{\rm d}t\\
		&=C\sum_{i=0}^{n-1}\int_{t_i}^{t_{i+1}}\int_{t_i}^{t}\int_{0}^{t_i}|v-u|^{2H-2}{\rm d}v{\rm d}u{\rm d}t\\
		&=C\sum_{i=0}^{n-1}\int_{t_i}^{t_{i+1}}\int_{t}^{t_{i+1}}\int_{0}^{t_i}|v-u|^{2H-2}{\rm d}v{\rm d}t{\rm d}u\\
		&\le Ch\sum_{i=0}^{n-1}\int_{t_i}^{t_{i+1}}\int_{0}^{t_i}(u-v)^{2H-2}{\rm d}v{\rm d}u\\
		&=Ch\sum_{i=0}^{n-1}\int_{t_i}^{t_{i+1}}\frac{1}{1-2H}\Big[(u-t_i)^{2H-1}-u^{2H-1}\Big]{\rm d}u
		\le Ch^{2H}.
	\end{align*}
	Combining with 
	\begin{align*}
		\int_{0}^{T} \|R\|_{V^{1/2H};[\lfloor t\rfloor,t]^2}{\rm d}t \le Ch^{2H}
	\end{align*}
	implied by Lemma \ref{RR}, 
	the inequality \eqref{R22} is obtained.
\end{proof}

Now we are in position to prove  Theorem \ref{main}.
\begin{proof}
	By \eqref{sde}-\eqref{sch}, we have 
	\begin{align*}
		X_t-Y_t&=\int_{0}^{t}a(X_s){\rm d}s -\int_{0}^{t}a(Y_{\lfloor s \rfloor}){\rm d}s\\
		&=\int_{0}^{t}\big(a(X_{\lfloor s \rfloor})-a(Y_{\lfloor s \rfloor})\big){\rm d}s +\int_{0}^{t}\big(a(X_{s})-a(X_{\lfloor s \rfloor})\big){\rm d}s,
	\end{align*}
	which satisfies 
	\begin{align*}
		\mathbb{E}\big|X_t-Y_t\big|^2&\le C\int_{0}^{t}\mathbb{E}\big|a(X_{\lfloor s \rfloor})-a(Y_{\lfloor s \rfloor})\big|^2{\rm d}s+ C \mathbb{E}\bigg|\int_{0}^{t}\big(a(X_{s})-a(X_{\lfloor s \rfloor})\big){\rm d}s\bigg|^2.
	\end{align*}
	Taking the supremum with respect to the time variable and using the Lipschitz continuity of $a$, we have 
	\begin{align*}
		\sup_{\tau\in[0,t]}\mathbb{E}\big|X_\tau-Y_\tau\big|^2\le C\int_{0}^{t} \sup_{\tau\in[0,s]}\mathbb{E}\big|X_{\tau}-Y_{\tau}\big|^2{\rm d}s
		+ C\sup_{t\in[0,T]}  \mathbb{E}\bigg|\int_{0}^{t}\big(a(X_{s})-a(X_{\lfloor s \rfloor})\big){\rm d}s\bigg|^2.
	\end{align*}
	Together with Gronwall's inequality, in order to prove 
	\begin{align*}
		\bigg(\sup_{t\in[0,T]}\mathbb{E}\big|X_t-Y_t\big|^2\bigg)^{1/2}\le Ch^{2H},
	\end{align*}
	it suffices to show
	\begin{align}\label{lm}
		\sup_{t\in[0,T]}\mathbb{E}\bigg|\int_{0}^{t}\big(a(X_{s})-a(X_{\lfloor s \rfloor})\big){\rm d}s\bigg|^2\le C h^{4H}.
	\end{align}

	In the sequel, we focus on proving	\eqref{lm}. 
	The chain rule applied to $a(X)$ implies that $a(X)$ solves the rough differential equation
	\begin{align*}
		{\rm d}a(X_{r})=a'(X_r)a(X_r){\rm d}r
		+\sigma a'(X_r){\rm d}B_r.
	\end{align*}
	Exploiting Lemma \ref{S}, we obtain
	\begin{align*}
		&a(X_{s})-a(X_{\lfloor s \rfloor})\\
		=&\int_{\lfloor s \rfloor}^{s}a'(X_r)a(X_r){\rm d}r+\sigma\int_{\lfloor s \rfloor}^{s} a'(X_r){\rm d} B_r\\
		=&\int_{\lfloor s \rfloor}^{s}a'(X_r)a(X_r){\rm d}r+\sigma\int_{\lfloor s \rfloor}^{s}a'(X_r)\delta B_r+ \sigma^2 H\int_{\lfloor s \rfloor}^{s} a''(X_r)r^{2H-1}{\rm d}r\\
		&+\sigma\int_{[0,T]^2}\mathds{1}_{[\lfloor s \rfloor,s]}(r_2)\mathds{1}_{[0,r_2]}(r_1)
		\Big[D_{r_1}\big[a'(X_{r_2})\big]-\sigma a''(X_{r_2})\Big]{\rm d}R_{r_1,r_2}\\
		=&:J_1(s)+J_2(s)+J_3(s)+J_4(s).
	\end{align*}
	%
	It follows that 
	\begin{align*}
		&\mathbb{E}\bigg|\int_{0}^{u}\big(a(X_{s})-a(X_{\lfloor s \rfloor})\big){\rm d}s\bigg|^2\\
		=& \mathbb{E}\Bigg[\bigg(\int_{0}^{u}\big(J_1(t)+J_2(t)+J_3(t)+J_4(t)\big){\rm d}t\bigg)\bigg(\int_{0}^{u}\big(J_1(s)+J_2(s)+J_3(s)+J_4(s)\big){\rm d}s\bigg)\Bigg]\\
		=&\sum_{i,j=1}^{4} \mathbb{E}\Bigg[\bigg(\int_{0}^{u}J_i(t){\rm d}t\bigg)\bigg(\int_{0}^{u}J_j(s){\rm d}s\bigg)\Bigg]\\
		\le &\sum_{i,j=1}^{4} \Bigg(\mathbb{E}\Bigg[\bigg(\int_{0}^{u}J_i(t){\rm d}t\bigg)^2\Bigg]\Bigg)^{1/2}
		\Bigg(\mathbb{E}\Bigg[\bigg(\int_{0}^{u}J_j(t){\rm d}t\bigg)^2\Bigg]\Bigg)^{1/2}.
	\end{align*}
	It then remains to estimate $\mathbb{E}\bigg[\Big(\int_{0}^{u}J_i(t){\rm d}t\Big)^2\bigg]$ for each $i\in\{1,2,3,4\}$.
	
	For $J_1$, Lemma \ref{lm-3} leads to
	\begin{align*}
		&\mathbb{E}\Bigg[\bigg(\int_{0}^{u}J_1(t){\rm d}t\bigg)^2\Bigg]\\
		=&\mathbb{E}\bigg[\int_{0}^{u}
		\int_{\lfloor t \rfloor}^{t}a'(X_r)a(X_r){\rm d}r{\rm d}t
		\int_{0}^{u}
		\int_{\lfloor s \rfloor}^{s}a'(X_v)a(X_v){\rm d}v{\rm d}s\bigg]\\
		=&\mathbb{E}\bigg[\int_{0}^{u}
		\int_{r}^{\lceil r \rceil}a'(X_r)a(X_r){\rm d}t{\rm d}r
		\int_{0}^{u}
		\int_{v}^{\lceil v \rceil}a'(X_v)a(X_v){\rm d}s{\rm d}v\bigg]\\
		\le &h^2 \int_{0}^{u}
		\int_{0}^{u}\mathbb{E}\Big[
		\big|a'(X_r)a(X_r)a'(X_v)a(X_v)\big|\Big]{\rm d}v{\rm d}r\le Ch^2.
	\end{align*}
	
	
	For $J_2$, based on \cite[Chapter 1]{Nualart}, we have
	\begin{align*}
		&\mathbb{E}\Bigg[\bigg(\int_{0}^{u}J_2(t){\rm d}t\bigg)^2\Bigg]\\
		=&\sigma^2\mathbb{E}\bigg[\int_{0}^{u}\int_{\lfloor t\rfloor}^{t}  a'(X_r)\delta B_r{\rm d}t \int_{0}^{u}\int_{\lfloor s\rfloor}^{s}  a'(X_v)\delta B_v{\rm d}s\bigg]\\
		=&\sigma^2\int_{0}^{u}\int_{0}^{u}\mathbb{E}\bigg[\int_{\lfloor t\rfloor}^{t}  a'(X_r)\delta B_r\int_{\lfloor s\rfloor}^{s}  a'(X_v)\delta B_v\bigg]{\rm d}t{\rm d}s\\
		\le &\sigma^2\int_{0}^{u}\int_{0}^{u}\mathbb{E}\bigg[\int_{[0,T]^2} \mathds{1}_{[\lfloor t\rfloor,t]}(r_1)
		\mathds{1}_{[\lfloor s\rfloor,s]}(r_2)a'(X_{r_1}) a'(X_{r_2}){\rm d}R_{r_1,r_2}\bigg]{\rm d}t{\rm d}s\\
		&+\sigma^2\int_{0}^{u}\int_{0}^{u}\mathbb{E}\bigg[\int_{[0,T]^2}
		\int_{[0,T]^2} \mathds{1}_{[\lfloor t\rfloor,t]}(r_1)
		\mathds{1}_{[\lfloor s\rfloor,s]}(r_2)\mathds{1}_{[0,r_1]}(u_1)\mathds{1}_{[0,r_2]}(u_2)\\
		&\qquad\qquad\qquad \quad \times D_{u_1}\big[a'(X_{r_1})\big] D_{u_2}\big[a'(X_{r_2})\big]{\rm d}R_{u_1,u_2}{\rm d}R_{r_1,r_2}\bigg]{\rm d}t{\rm d}s\\
		=&:\sigma^2A_1+\sigma^2A_2.
	\end{align*}
	According to the regularity of $R$ and $X$ given in Lemma \ref{RR}and Lemma \ref{lm-3}, we get from the fact $H>1/3$ that the functions 
	\begin{align*}
		f:[0,T]^2&\rightarrow \mathbb{R},\\
		(r_1,r_2)&\mapsto f_{r_1,r_2}:=a'(X_{r_1})a'(X_{r_2})
	\end{align*}
	and $R$ have complementary regularity almost surely. Moreover, for any $p\ge 1$ and $\beta<H$, it holds that 
	\begin{align*}
		\mathbb{E}\Big[|||f|||^p_{V^{1/\beta};[0,T]^2}\Big]\le C.
	\end{align*}
	Then Lemma \ref{Young} and Lemma \ref{lm-1} produce
	\begin{align*}
		|A_{1}|
		\le C\int_{0}^{u}\int_{0}^{u} \|R\|_{V^{1/2H};[\lfloor t\rfloor,t]\times[\lfloor s\rfloor,s]}{\rm d}s{\rm d}t\le Ch^{2H+1}.
	\end{align*}
	Meanwhile, the Malliavin derivative satisfies
	\begin{align*}
		D_{u_1}\big[a'(X_{r_1})\big]=a''(X_{r_1})D_{u_1}X_{r_1}=\sigma\mathcal{J}_{r_1}
		\mathcal{J}^{-1}_{u_1}a''(X_{u_1}),
	\end{align*}
	where $\mathcal{J}$ and $\mathcal{J}^{-1}$ solve the linear system \cite{CL20199}
	\begin{align*}
		\left\{
		\begin{aligned}
			\mathcal{J}_t&=1+\int_{0}^{t}a'(X_s)\mathcal{J}_s{\rm d}s,\\
			\mathcal{J}^{-1}_t&=1+\int_{0}^{t}\mathcal{J}^{-1}_sa'(X_s){\rm d}s.
		\end{aligned}
		\right.
	\end{align*}
	Since the second order derivative of $a$ is bounded, it implies that  the functions
	\begin{align*}
		\tilde{f}:[0,T]^2&\rightarrow \mathbb{R},\\
		(r_1,r_2)&\mapsto \tilde{f}_{r_1,r_2}:=\int_{[0,T]^2}\mathds{1}_{[0,r_1]}(u_1)\mathds{1}_{[0,r_2]}(u_2) \mathbb{E}\Big[D_{u_1}\big[a'(X_{r_1})\big] D_{u_2}\big[a'(X_{r_2})\big]\Big]{\rm d}R_{u_1,u_2}
	\end{align*}
	and $R$ have complementary regularity almost surely, and $\mathbb{E}\Big[|||\tilde{f}|||^p_{V^{1/\beta};[0,T]^2}\Big]\le C$.
	Then we deduce
	\begin{align*}
		|A_{2}|
		\le C\int_{0}^{u}\int_{0}^{u} \|R\|_{V^{1/2H};[\lfloor t\rfloor,t]\times[\lfloor s\rfloor,s]}{\rm d}s{\rm d}t\le Ch^{2H+1}.
	\end{align*}
	The above estimates for  $A_1$ and $A_2$ yield
	\begin{align*}
		\mathbb{E}\Bigg[\bigg(\int_{0}^{u}J_2(t){\rm d}t\bigg)^2\Bigg]\le C h^{2H+1}.
	\end{align*}
	
	For $J_3$, due to $H>1/3$ and Lemma \ref{lm-3}, it holds that 
	\begin{align*}
		\mathbb{E}\Bigg[\bigg(\int_{0}^{u}J_3(t){\rm d}t\bigg)^2\Bigg]
		&=\sigma^4H^2\mathbb{E}\bigg[\int_{0}^{u}
		\int_{\lfloor t \rfloor}^{t}a''(X_r)r^{2H-1}{\rm d}r{\rm d}t
		\int_{0}^{u}
		\int_{\lfloor s \rfloor}^{s}a''(X_v)v^{2H-1}{\rm d}v{\rm d}s\bigg]\\
		&=\sigma^4H^2\mathbb{E}\bigg[\int_{0}^{u}
		\int_{r}^{\lceil r \rceil}a''(X_r)r^{2H-1}{\rm d}t{\rm d}r
		\int_{0}^{u}
		\int_{v}^{\lceil v \rceil}a''(X_v)v^{2H-1}{\rm d}s{\rm d}v\bigg]\\
		&\le Ch^2 \int_{0}^{u}
		\int_{0}^{u}
		\mathbb{E}\Big[\big|a''(X_r)a''(X_v)\big|\Big]r^{2H-1}v^{2H-1}{\rm d}v{\rm d}r\le Ch^2.
	\end{align*}
	
	For $J_4$, recall that 
	\begin{align*}
		&\mathbb{E}\Bigg[\bigg(\int_{0}^{u}J_4(t){\rm d}t\bigg)^2\Bigg]\\
		=&\sigma^2\mathbb{E}\Bigg[\bigg(\int_{0}^{u}
		\int_{[0,T]^2}
		\mathds{1}_{[\lfloor t\rfloor,t]}(r_2)\mathds{1}_{[0,r_2]}(r_1)
		\Big[D_{r_1}\big[a'(X_{r_2})\big]-\sigma a''(X_{r_2})\Big]{\rm d}R_{r_1,r_2}{\rm d}t\bigg)\\
		&~\times\bigg(\int_{0}^{u}
		\int_{[0,T]^2}
		\mathds{1}_{[\lfloor s\rfloor,s]}(r_4)\mathds{1}_{[0,r_4]}(r_3)
		\Big[D_{r_3}\big[a'(X_{r_4})\big]-\sigma a''(X_{r_4})\Big]{\rm d}R_{r_3,r_4}{\rm d}s\bigg)\Bigg].
	\end{align*}
	Based on the boundedness of the third order derivative of $a$, \cite[Section 6]{CL20199} implies that 
	the continuous functions
	\begin{align*}
		g:[0,T]^2&\rightarrow \mathbb{R},\\
		(r_1,r_2)&\mapsto g_{r_1,r_2}:=\mathds{1}_{[0,r_2]}(r_1)\Big[D_{r_1}\big[a'(X_{r_2})\big]-\sigma a''(X_{r_2})\Big]
	\end{align*}
	and $R$ have complementary regularity almost surely, and 
	$\mathbb{E}\Big[|||g|||^p_{V^{1/\beta};[0,T]^2}\Big]\le C$.
	Using Lemma \ref{lm-1} and the formulation
	\begin{align*}
		& \int_{0}^{u}\int_{[0,T]^2}
		\mathds{1}_{[\lfloor t\rfloor,t]}(r_2)\mathds{1}_{[0,r_2]}(r_1)
		\Big[D_{r_1}\big[a'(X_{r_2})\big]-\sigma a''(X_{r_2})\Big]{\rm d}R_{r_1,r_2}{\rm d}t \\
		=&\int_{0}^{u}\int_{[0,T]^2}
		\mathds{1}_{[\lfloor t\rfloor,t]}(r_2)
		\mathds{1}_{[0,\lfloor t\rfloor]}(r_1)g_{r_1,r_2}{\rm d}R_{r_1,r_2}{\rm d}t
		+\int_{0}^{u}\int_{[0,T]^2}
		\mathds{1}_{[\lfloor t\rfloor,t]}(r_2)
		\mathds{1}_{[\lfloor t\rfloor,t]}(r_1)
		g_{r_1,r_2}{\rm d}R_{r_1,r_2}{\rm d}t,
	\end{align*}
	we have
	\begin{align*}
		\mathbb{E}\Bigg[\bigg(\int_{0}^{u}J_4(t){\rm d}t\bigg)^2\Bigg]\le Ch^{4H},
	\end{align*}
	which completes the proof.
\end{proof}

\begin{corollary}\label{cor}
	Let $H\in(\frac13,\frac12)$. Assume that $a(x)=Ax$ with a constant $A$. Then it holds that 
	\begin{align*}
		\bigg(\sup_{t\in[0,T]}\mathbb{E}\big|X_t-Y_t\big|^2\bigg)^{1/2}\le Ch^{H+\frac12},
	\end{align*}
	where $X$ solves \eqref{sde} and $Y$ is given by the Euler scheme \eqref{sch}.
\end{corollary}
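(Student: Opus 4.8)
The plan is to revisit the decomposition $a(X_s)-a(X_{\lfloor s\rfloor})=J_1(s)+J_2(s)+J_3(s)+J_4(s)$ obtained in the proof of Theorem \ref{main}, specialized to $a(x)=Ax$, and to exploit the simplifications that linearity forces. With $a(x)=Ax$ we have $a'(x)=A$ (constant) and $a''=a'''=0$, so immediately $J_3(s)=0$ and the terms $J_4(s)$ vanish as well, since $D_{r_1}[a'(X_{r_2})]=a''(X_{r_2})D_{r_1}X_{r_2}=0$ and $\sigma a''(X_{r_2})=0$. Thus only $J_1(s)=A^2 X_r$ integrated over $[\lfloor s\rfloor,s]$ and $J_2(s)=\sigma A\int_{\lfloor s\rfloor}^s \delta B_r$ survive. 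Following the Gronwall reduction in the proof of Theorem \ref{main}, it again suffices to prove the sharpened estimate
\begin{align*}
	\sup_{t\in[0,T]}\mathbb{E}\bigg|\int_{0}^{t}\big(a(X_{s})-a(X_{\lfloor s \rfloor})\big){\rm d}s\bigg|^2\le C h^{2H+1},
\end{align*}
and by the Cauchy--Schwarz splitting into the four (now two) cross terms it is enough to bound $\mathbb{E}\big[(\int_0^u J_i(t){\rm d}t)^2\big]$ for $i=1,2$ by $Ch^{2H+1}$.

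For the $J_2$-term the key point is that the Skorohod integrand $a'(X_r)=A$ is \emph{deterministic and constant}, so the Malliavin-derivative contribution $A_2$ in the proof of Theorem \ref{main} is identically zero; only $A_1$ remains, and there the integrand $f_{r_1,r_2}=A^2$ is constant, giving
\begin{align*}
	\mathbb{E}\Bigg[\bigg(\int_0^u J_2(t){\rm d}t\bigg)^2\Bigg]
	=\sigma^2 A^2\int_0^u\int_0^u \mathbb{E}\Big[\big(B_t-B_{\lfloor t\rfloor}\big)\big(B_s-B_{\lfloor s\rfloor}\big)\Big]\,{\rm d}s\,{\rm d}t,
\end{align*}
which is controlled by $C\int_0^u\int_0^u\|R\|_{V^{1/2H};[\lfloor t\rfloor,t]\times[\lfloor s\rfloor,s]}{\rm d}s{\rm d}t\le Ch^{2H+1}$ via Lemma \ref{lm-1}, exactly as for $A_1$ in Theorem \ref{main}; this term alone already carries the rate $h^{H+1/2}$ after the square root. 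The $J_1$-term is handled as before: since $a'(X_r)a(X_r)=A^2X_r$ and $\mathbb{E}[\sup_\tau|X_\tau|^p]\le C$ by Lemma \ref{lm-3}, one gets $\mathbb{E}\big[(\int_0^u J_1(t){\rm d}t)^2\big]\le Ch^2$, which is of higher order than $h^{2H+1}$ since $2>2H+1$ for $H<1/2$. The cross term $\mathbb{E}[(\int J_1)(\int J_2)]$ is then $\le (Ch^2)^{1/2}(Ch^{2H+1})^{1/2}=Ch^{H+3/2}$, again negligible. Collecting the three contributions yields the bound $Ch^{2H+1}$, hence after the square root and Gronwall the claimed rate $h^{H+1/2}$.

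The only genuinely delicate step is the $J_2$-estimate, i.e. verifying that in the linear case the Skorohod integral $\int_{\lfloor t\rfloor}^t A\,\delta B_r=A(B_t-B_{\lfloor t\rfloor})$ reduces to an ordinary fBm increment (so that the second, Malliavin-derivative, layer of the isometry in the proof of Theorem \ref{main} genuinely disappears), and that the resulting double time integral of the increment covariance is $O(h^{2H+1})$ rather than merely $O(h^{4H})$ — this improvement is precisely what \eqref{R2} of Lemma \ref{lm-1} provides, and it is where the gain from $2H$ to $H+\frac12$ comes from. Everything else is a direct specialization of the computations already carried out for Theorem \ref{main}, with the $J_3$ and $J_4$ terms dropping out entirely because $a''\equiv 0$.
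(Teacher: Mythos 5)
Your proposal is correct and follows essentially the same route as the paper: the paper's proof likewise observes that $a''\equiv 0$ forces $J_3=J_4=0$ and that the $J_1$, $J_2$ estimates from Theorem \ref{main} (which already give $Ch^2$ and $Ch^{2H+1}$ respectively, the latter via Lemma \ref{lm-1}) combine to yield $Ch^{2H+1}$ and hence the rate $h^{H+\frac12}$. Your additional remarks — that $A_2$ vanishes because the Skorohod integrand $a'(X_r)=A$ is deterministic, and that the cross term is controlled by Cauchy--Schwarz — are correct fillings-in of details the paper leaves implicit.
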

\begin{proof}
	Since $a(x)=Ax$, the second derivative of $a$ vanishes. Repeating the proof of Theorem \ref{main}, we have 
	\begin{align*}
		\mathbb{E}\Bigg[\bigg(\int_{0}^{u}J_1(t){\rm d}t\bigg)^2\Bigg]+\mathbb{E}\Bigg[\bigg(\int_{0}^{u}J_2(t){\rm d}t\bigg)^2\Bigg]\le Ch^{2H+1}
	\end{align*}
	and 	$J_3=J_4=0$. Then the result is obtained.
\end{proof}

\begin{remark}
	In the case of $H>\frac12$, the framework for Malliavin calculus  holds with $R$ being more regular and we refer to \cite{DX2021,hhkw,HHW,HuEuler,MC11} and references therein for the analysis on numerical schemes.
	In the case of $H\le\frac13$, more efforts should be paid to establish a conversion formula between the Skorohod integral and the rough integral.
	If  $H\le \frac14$, the well-posedness of SDEs with multiplicative noises in multi-dimensional cases is still an open problem. 
\end{remark}

\section{Numerical simulations}\label{sec-4}

In this section, we give numerical simulations for the SDE
\begin{align*}
	{\rm d}X_t=a(X_t){\rm d} t +\sigma {\rm d}B_t,\quad t\in(0,1]
\end{align*}
with $X_0=1$ and $B$ being the fBm with Hurst parameter $H\in(\frac13,\frac12)$.

\begin{example}\label{ex-1}
	Let the diffusion coefficient $\sigma=1$ and the drift coefficient 
	\begin{align*}
		a(x)=
		\left\{
		\begin{aligned}
			&\ln |x|,\quad |x|\ge 1,\\
			&\frac16 x^6 -\frac34 x^4 +\frac32 x^2 -\frac{11}{12},\quad |x|<1.
		\end{aligned}
		\right.
	\end{align*}
	Since $a$ has bounded derivatives up to order three, Theorem \ref{main} leads to that the mean square convergence rate of the Euler scheme \eqref{sch} is $2H$.
\end{example}

\begin{example}\label{ex-2}
	Let the diffusion coefficient $\sigma=9$ and the drift coefficient $a(x)=2x$.
	Due to the linearity of  $a$, it follows from Corollary \ref{cor} that the mean square convergence rate of the Euler scheme \eqref{sch} is $H+\frac12$.
\end{example}

In Figure\ref{f1} and Figure \ref{f2}, the error 
\begin{align*}
	\bigg(\sup_{t\in[0,T]}\mathbb{E}\big|X_t-Y_t\big|^2\bigg)^{1/2}
\end{align*}
is presented for Example \ref{ex-1} and Example \ref{ex-2}, respectively. We take the Hurst parameter of the fBm as $H=0.35,0.4,0.45$. The exact solution is simulated by the numerical solution with a fine time step size $h=\frac{1}{2^{15}}$ and the expectation is approximated by $1000$ sample paths. The numerical results support our theoretical analysis.

\begin{figure}
	\centering
	\subfigure[$H=0.35$]{
		\begin{minipage}[t]{0.3\linewidth}
			\includegraphics[height=3.6cm,width=3.6cm]{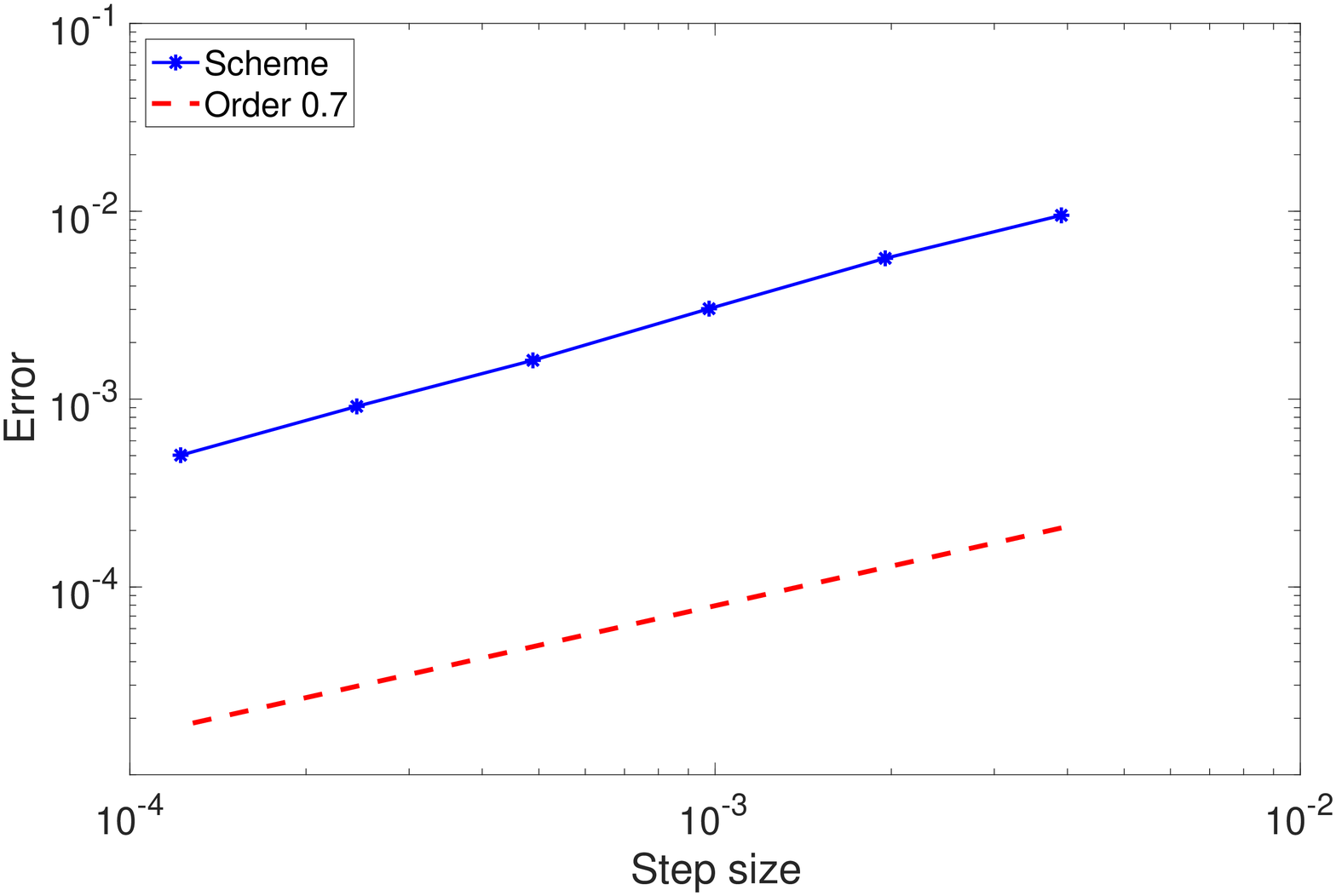}
		\end{minipage}
	}
	\subfigure[$H=0.4$]{
		\begin{minipage}[t]{0.3\linewidth}
			\includegraphics[height=3.6cm,width=3.6cm]{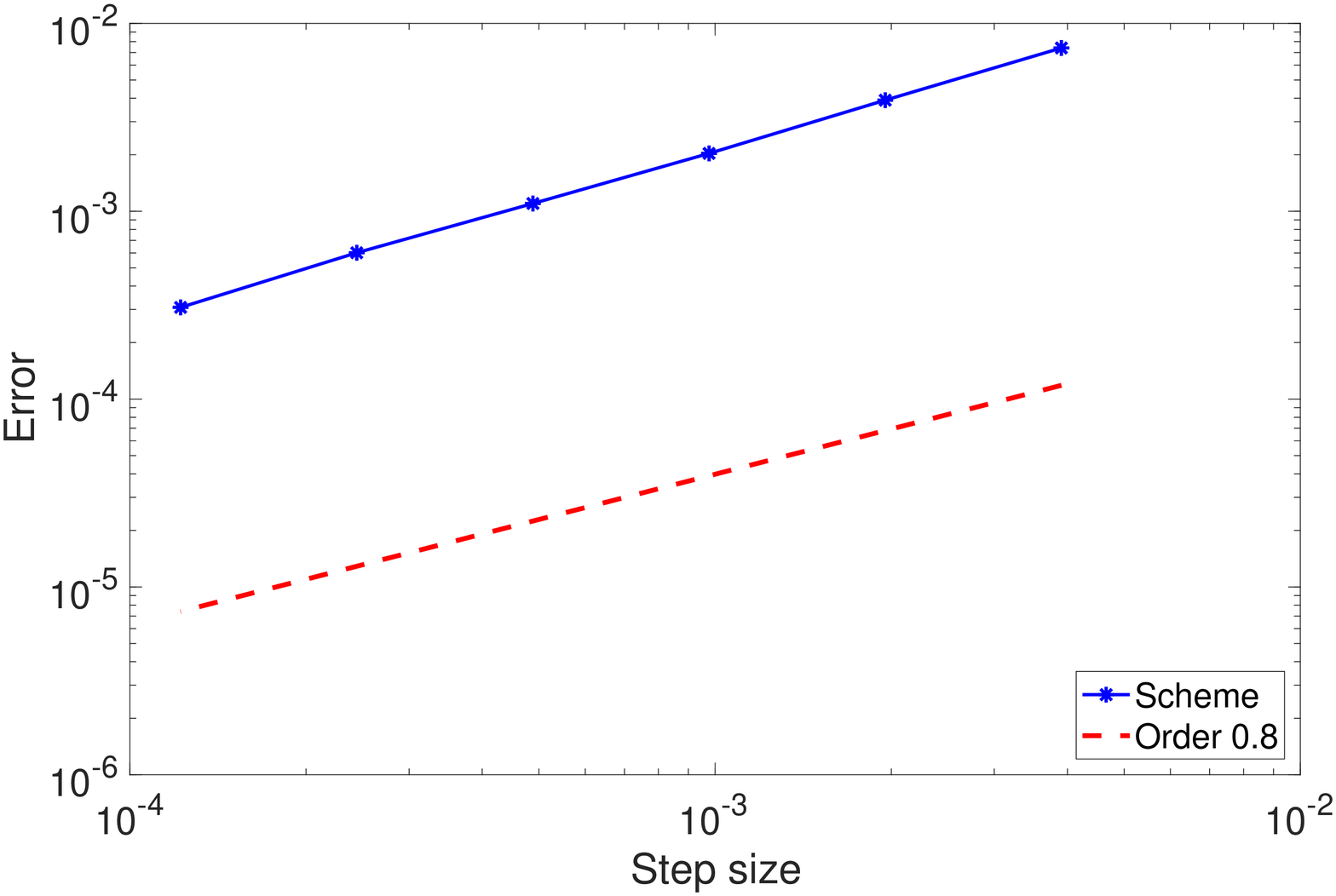}
		\end{minipage}
	}
	\subfigure[$H=0.45$]{
		\begin{minipage}[t]{0.3\linewidth}
			\includegraphics[height=3.6cm,width=3.6cm]{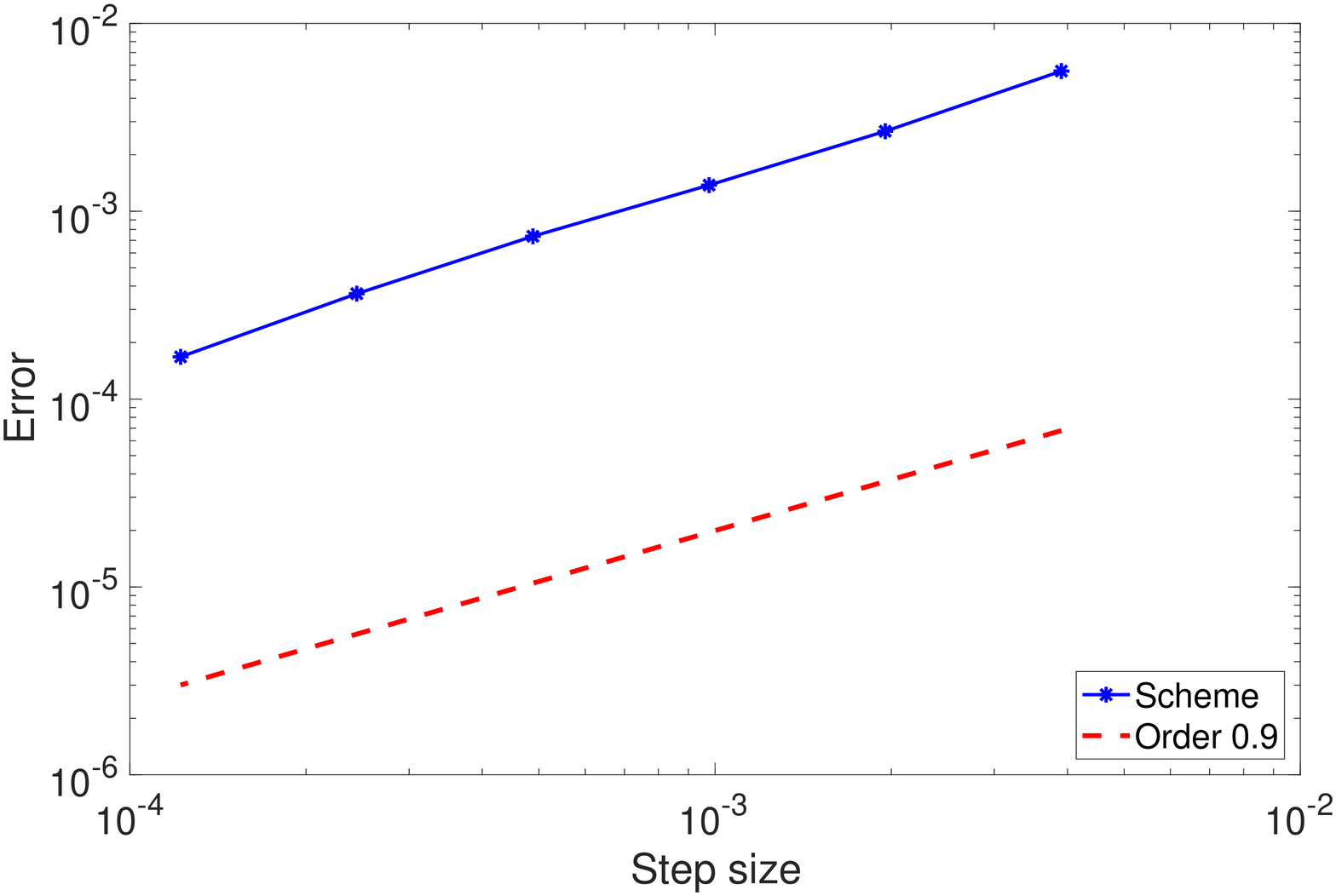}
		\end{minipage}
	}
	\caption{Error vs. Step size for Example \ref{ex-1}.}\label{f1}
\end{figure}

\begin{figure}
	\centering
	\subfigure[$H=0.35$]{
		\begin{minipage}[t]{0.3\linewidth}
			\includegraphics[height=3.6cm,width=3.6cm]{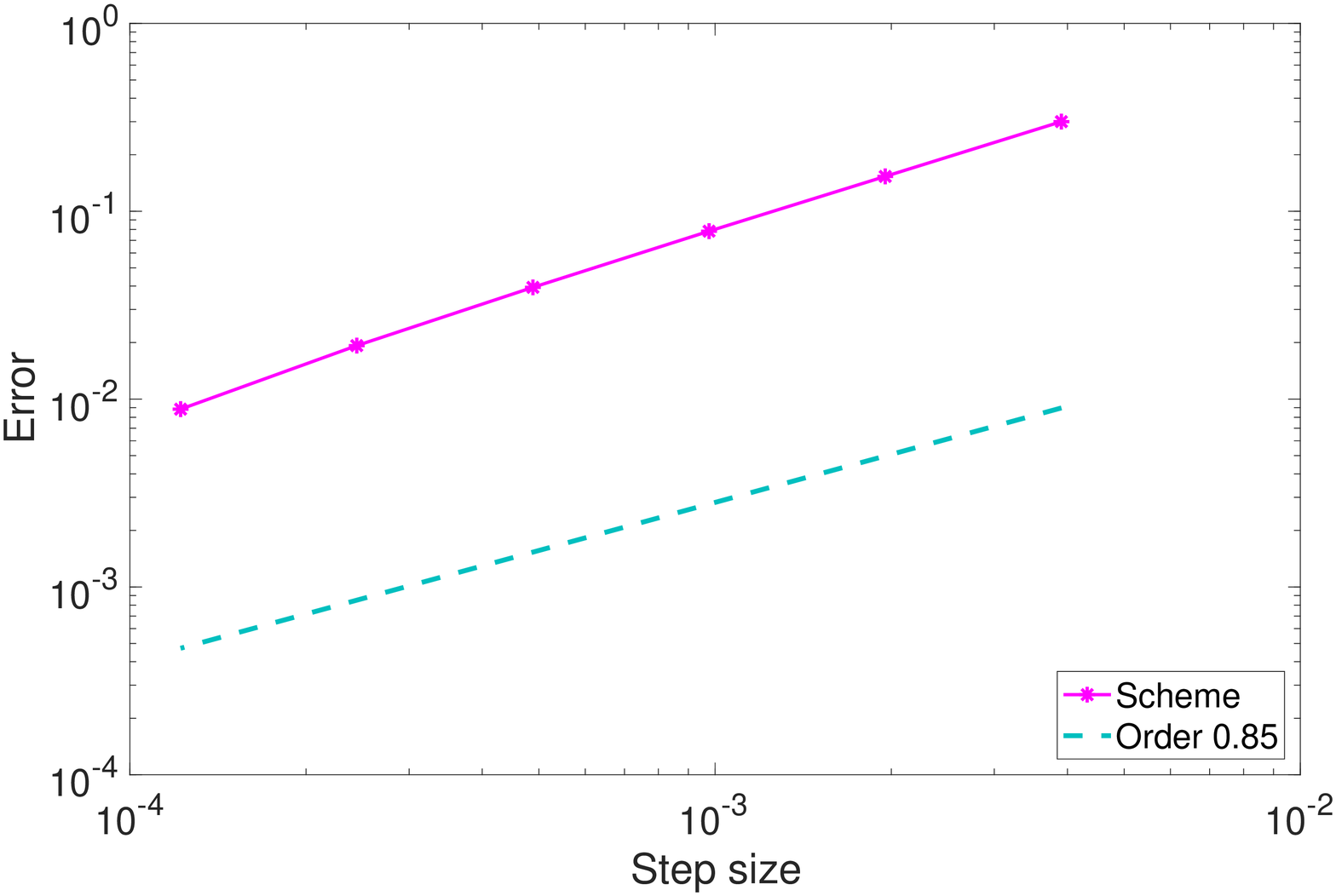}
		\end{minipage}
	}
	\subfigure[$H=0.4$]{
		\begin{minipage}[t]{0.3\linewidth}
			\includegraphics[height=3.6cm,width=3.6cm]{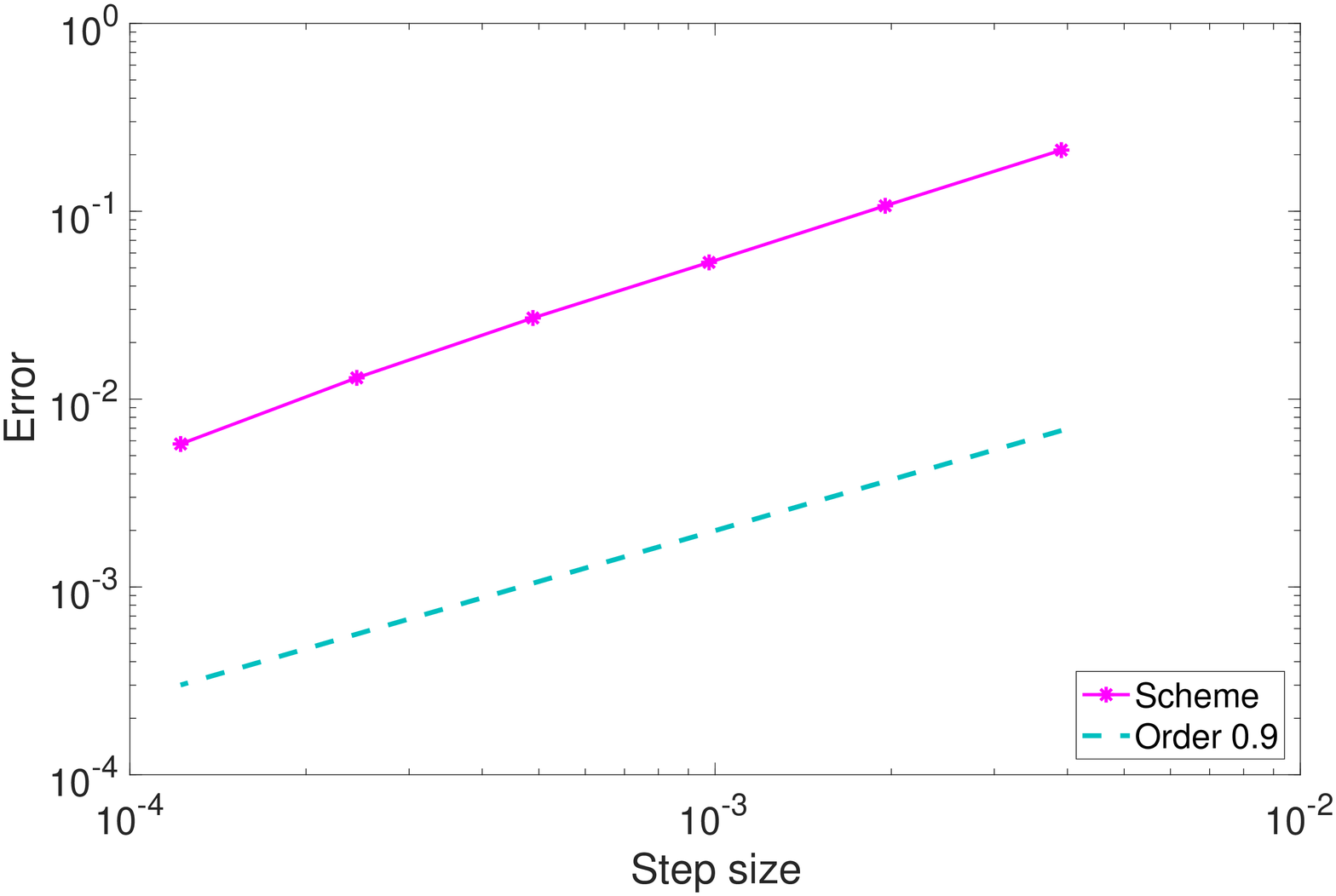}
		\end{minipage}
	}
	\subfigure[$H=0.45$]{
		\begin{minipage}[t]{0.3\linewidth}
			\includegraphics[height=3.6cm,width=3.6cm]{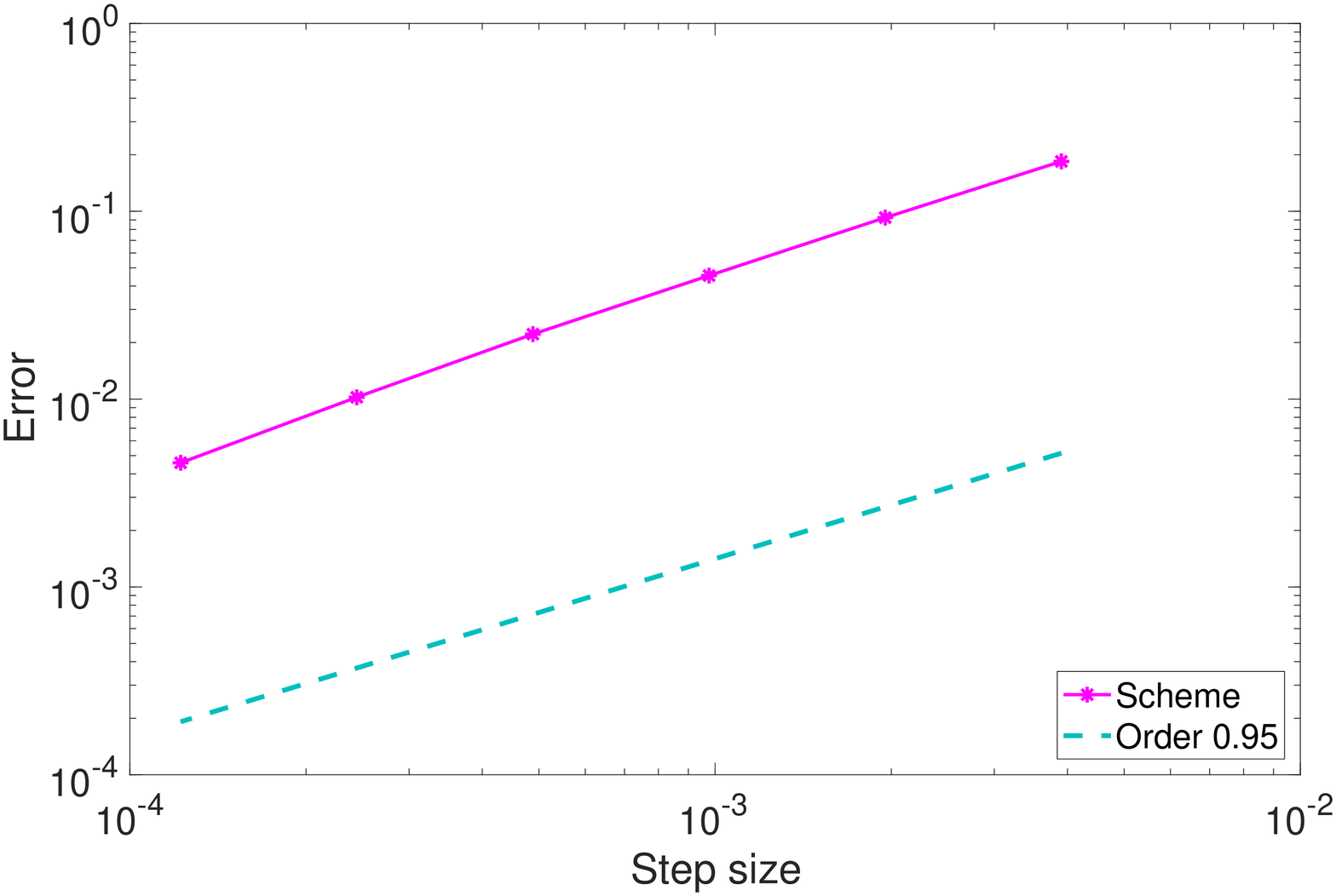}
		\end{minipage}
	}
	\caption{Error vs. Step size for Example \ref{ex-2}.}\label{f2}
\end{figure}

\section*{Acknowledgements}
The first author is funded by National Natural Science Foundation of China (NO. 12101127) and `Young and Middle-aged Teacher Education Research Project' of Fujian Province (NO. JAT200075). The second author is supported by National Natural Science Foundation of China (NO. 11971470, NO. 11871068 and NO. 12031020) and the National Key R \& D Program of China (NO. 2020YFA0713701). 

\bibliographystyle{plain}
\bibliography{mybibfile}

\end{document}